\newtheorem{theorem}{Theorem}[section]
\newtheorem{lemma}[theorem]{Lemma}
\newtheorem{corollary}[theorem]{Corollary}
\theoremstyle{definition}
\newtheorem{definition}[theorem]{Definition}
\newtheorem{example}[theorem]{Example}
\newtheorem{proposition}[theorem]{Proposition}
\theoremstyle{remark}
\newtheorem{remark}[theorem]{Remark}
\newtheorem{notation}[theorem]{Notation}
\numberwithin{equation}{section}
\begin{document}

\title{On simple Lie $2$-algebra of toral rank $3$.}

\author{Carlos R. Payares Guevara}
\address{Facultad de ciencias B\'asicas,  Universidad Tecnol\'ogica de  Bolivar, Cartagena de Indias - Colombia}
\email{cpayares@utb.edu.co}

\author{Jeovanny de J. Muentes Acevedo}
\address{Facultad de ciencias B\'asicas,  Universidad Tecnol\'ogica de  Bolivar, Cartagena de Indias - Colombia}
\email{jmuentes@utb.edu.co}



\date{February 7, 2019}



\begin{abstract}
Simple Lie algebras  of finite dimension over an algebraically closed field of characteristic 0 or $p> 3$ were recently classified. However, the problem  over  an algebraically closed field of characteristics 2 or 3 there exist only partial results. The first result on the problem of classification of simple Lie algebra of finite dimension over an algebraically closed field of characteristic 2 is that these algebras have  absolute toral rank greater than or equal to 2. 
In this paper we show that there are not simple Lie $2$-algebras with toral rank $3$ over an algebraically closed field of characteristic $2$ and dimension less or equal to $16$.
\end{abstract}

\subjclass[2010]{ 17B50,  	17B20, 	17B22 }

\keywords{simple Lie 2-algebra, toral rank, Cartan decomposition}

\maketitle

\section*{Introduction}
The simple Lie algebras of  finite  dimension over an algebraically closed field of characteristic zero were first classified by Killing ($1888$) and
Cartan ($1894$).  These algebras fall in four infinite families, $A_{n}$, $B_{n}$, $C_{n}$ and $D_{n}$, and five exceptional cases, $E_{6}$, $E_{7}$, $E_{8}$, $G_{2}$ and $F_{4}$ (see \cite{ja}).

\medskip

   Simple finite-dimensional Lie algebras over an algebraically closed field of prime characteristic $ p> 7 $ were classified by  H. Strade, R. Block and R. L. Wilson in the middle of years 90 (see \cite{BL1}, \cite{BL2}, \cite{BL3}, \cite{ST1},  \cite{ST3}, \cite{ST4}, \cite{ST5}, \cite{ST6} and \cite{Wi}). In a series of papers,  H. Strade and A. Premet classified the finite-dimensional simple Lie algebras over an algebraically closed field of characteristic $ p = 5 $ and $p = 7$ in the beginning
of this century  (see \cite{PS1}, \cite{PS2}, \cite{PS3}, \cite{PS4}, \cite{PS5} and \cite{PS6}).  It asserts that every simple Lie algebra over an
algebraically closed field of characteristic $p> 3$ is either classical, or of Cartan, or Melikian type.



\medskip

After the classification of simple Lie algebras, of finite dimension, over a field of  characteristic $p> 3$, the main problem still open in the category of Lie algebras of finite dimension is the classification of simple Lie algebras on
 an algebraically closed field of characteristic $p = 2$ and $p = 3$.
  In particular for $p = 2$   many new phenomena arise    (for instances, 
simple Lie algebras in   characteristic zero are not necessarily simple in characteristic two) and
the classification will differ significantly from those in characteristic $0$ and $p >3$. The first results for the classification problem in characteristic $2$ were made by S. Skryabin in \cite{gr21}. In that work,  S. Skryabin proved that all  finite dimensional Lie algebra of absolute toral rank $1$ over an algebraically closed field $K$ of characteristic $2$ is solvable, or  equivalently, all finite dimensional simple Lie algebra over an algebraically closed field of characteristic $2$ has absolute toral rank of at least $2$.

\medskip

In \cite{gr1}, Section 6, A. Premet and H. Strade present the following problem which is wide open.

\noindent \textbf{Problem 1}. \textit{Classify all finite dimensional simple Lie algebras of absolute toral
rank two over an algebraically closed field of characteristics 2 and 3}.  

\medskip

Strong results closely related to this problem were obtained by A. Grishkov and A. Premet in \cite{gr} (work in progress). They annouced the following result:
All  finite dimensional simple Lie algebra over an algebraically closed field of characteristic $2$ of absolute toral rank $2$ are classical of dimesion $3$, $8$, $14$ or $26$. In particular, all  finite dimensional simple Lie $2$-algebra over a field of characteristic $2$ of (relative) toral rank $2$ is isomorphic to $A_{2}$, $G_{2}$ or $D_{4}.$

\medskip

The case when the absolute toral rank or relative toral rank  is greater than or equal to $3$   is much more difficult.
For these  cases, we propose the followings problems, which are still open: 

\medskip

\noindent \textbf{Problem 2}. \textit{Classify all finite dimensional simple Lie algebras of absolute toral
rank greater than or equal to $3$ over an algebraically closed field of characteristic $2$.}

\medskip

\noindent \textbf{Problem 3}. \textit{Classify all finite dimensional simple Lie $2$-algebras of relative toral
rank greater than or equal to $3$ over an algebraically closed field of characteristic $2$.}

\medskip

In this paper we will give a partial result for Problem 3 when the relative toral rank is $3$. Specifically,  we show that there are not simple Lie $2$-algebras of dimension less than or equal to $16$ with  (relative) toral rank $3$ over an algebraically closed field of characteristic $2$.

\medskip

In the next section we will present some basic definitions and well-known results that will be used throughout the work. In Section 2, we will prove that there are no simple Lie 2-algebras with toral rank 3 and of dimension less than or equal to 6. In Section 3, we find the Cartan  decomposition of a Lie 2-algebra of dimension greater than or equal to 7 with respect to a maximal subalgebra of dimension 3. 
This decomposition will give us the necessary tools for the classification of these algebras and  will be used for the rest of the work. 
The study of the Cartan decomposition  of these algebras with respect to a toral subalgebra of dimension 3 when the root spaces have cardinality less than 7 will allow us to conclude in Section 4 that there are no simple Lie 2-algebras of toral rank 3 with dimension between 7 and 9. When the root space is equal to the dual of the toral subalgebra, the dimension of the algebra is greater than or equal to 10. Using this fact and Theorem \ref{theorem53}, in which we  classify the algebras in whose decomposition of Cartan the root  spaces   have different dimensions,   we will prove in the last section that there are no simple Lie 2-algebra of dimension between 10 and 16.




\section{Preliminaries}
Throughout this paper all the  algebras are finite-dimensional and are defined over a fixed algebraically closed field $K$ of characteristic $2$ containing the prime field  $\mathbb{F}_{2}$. We will start presenting  some basic definitions and well-known facts.

\begin{definition}
A \textit{Lie $2$-algebra} is a pair $(\mathfrak{g}, [2])$, where $\mathfrak{g}$ is a Lie algebra over $K$, and $[2]: \mathfrak{g}\rightarrow  \mathfrak{g} $, $ a\mapsto a ^ {[2]} $ is a map (called $2$-map) such that:
\begin{enumerate}
 \item  $ {(a + b)} ^ {[2]} = {a} ^ {[2]} + {b} ^ {[2]} + [a, b] $,  $\forall a, b\in\mathfrak{g} $.
 \item  $ (\lambda a) ^ {[2]} = \lambda ^ {2} a ^ {[2]} $,  $ \forall \lambda \in K $, $ \forall a \in \mathfrak{g}. $
 \item $ \text{ad} ({b} ^ {[2]}) = (\text{ad} (b)) ^ {2} $, $ \forall  b \in \mathfrak{g} $.
 \end{enumerate}
 \end{definition}

If the $2$-map $[2]$ exists, it is unique for 
any Lie $2$-algebra $(\mathfrak{g}, [2])$ with $\mathfrak{z} (\mathfrak{g}) = 0$. A  Lie $2$-algebra 
 $(\mathfrak{g}, [2])$ is called   \textit{simple} if   $\mathfrak{g}$ is a simple Lie algebra on $K$.

 \begin{example}
  Set \[\mathfrak{o}_{3}(K):=\{A=(a_{ij})\in M_3(K) :a_{ii}=0\text{ and }  a_{ij}=a_{ji} \}.\] The canonical basis of $\mathfrak{o}_{3}(K)$ is given by: $e_1 := e_{12}+e_{21}$, $e_2 :=  e_{13}+e_{31}$, and $e_3 := e_{23}+e_{32}.$  Then $\mathfrak{o}_{3}(K) = Ke_1\oplus Ke_2\oplus Ke_3,$ with $  [e_1,e_2]=e_3,\; [e_1,e_3]=e_2,\; [e_3,e_1]=e_2$, is the only (up to isomorphism) simple Lie algebra of dimension $3$ on $K$. However,  it is not a Lie $2$-algebra. 
 \end{example}

\begin{definition} Let $(\mathfrak{g},[2])$ be a  Lie
$2$-algebra over $K$. An element $x\in\mathfrak{g}$ is called \textit{semisimple} (respectively, \textit{$2$-nilpotent}) if
$x$ lies in the $2$-subalgebra of $\mathfrak{g}$ generated by $x$, that is $x\in \text{span}\{x,{x}^{[2]},{x}^{[2]^{2}},...\}$
(respectively, if ${x}^{[2]^{n}}=0$ for some  $n\in \mathbb{N}$).\end{definition}

It is well-known that for any $x \in\mathfrak{g}$ there are unique elements  $x_{s}$ and $x_{n}$ in $\mathfrak{g}$ such that
$x_{s}$ is semisimple, $x_{n}$ is $2$-nilpotent, and  $x = x_{s} + x_{n}$ with $[x_{s},x_{n}]=0$ (Jordan-Chevalley-Seligman decomposition).

\medskip


\begin{definition}A \textit{torus} in $\mathfrak{g}$ is an abelian subalgebra $\mathfrak{t}$ for which the $2$-mapping is one-to-one.\end{definition}

For  a torus $\mathfrak{t}$ there is a basis $\{t_{1},... , t_{n}\}$ such that $t^{[2]}_{i} = t_{i}$. The elements satisfying  $t=t^{[2]}$ will be called \textit{toroidal elements}.
A torus $\mathfrak{t}_{1}$ of $\mathfrak{g}$ is called \textit{maximal} if the inclusion $\mathfrak{t_{1}}\subseteq \mathfrak{t_{2}}$, with $\mathfrak{t_{2}}$ toral,  implies $\mathfrak{t_{1}} =\mathfrak{ t_{2}}.$ 

\begin{definition}(H. Strade \cite{ST2}).
The \textit{(relative) toral rank} of a  Lie $2$-algebra $(\mathfrak{g},[2])$ is given by \[ {MT} (\mathfrak{g}):= \max \{\text{dim}_{K}(\mathfrak{t}): \mathfrak{t}\text{ is a torus in }  \mathfrak{g} \}. \]
\end{definition}

For instances, the centralizer $\mathfrak{c}_{\mathfrak{g}}(\mathfrak{t})$ of any maximal torus in $\mathfrak{g}$ is a Cartan subalgebra of $\mathfrak{g}$ and, conversely, the semisimple elements of any Cartan subalgebra of $\mathfrak{g}$ lie in its center
and form a maximal torus in $\mathfrak{g}$.


\medskip

Let $\mathfrak{t}$ be a maximal torus of $(\mathfrak{g}, [2])$,  $\mathfrak{h}:=\mathfrak{c}_{\mathfrak{g}}(\mathfrak{t})$, and let $V$ be a finite dimen\-sional  $(\mathfrak{g},[2])$-module (this means that $\rho_{V}(x^{[2]})=\rho_{V}(x)^{2}$,  $\forall\, x \in \mathfrak{g}$ where $\rho_{V}\colon \mathfrak{g}\to  \text{End}_{K}(V)$  denotes the corresponding $2$-representation). Since $\mathfrak{t}$ is abelian, then $\rho_{V}(\mathfrak{t})$ is abelian and consists of  semisimple  elements. Therefore,  $V$ can be decomposed into weight spaces respect to $\mathfrak{t}$ as
\[V=  \underset{\lambda\in {\mathfrak{t}}^{*}}\oplus  V_\lambda, \quad \text{ where } V_{\lambda}:=\{v\in V: \rho(t)(v):=v\cdot t = \lambda(t)\,v,\,\, \forall t\in \mathfrak{t}\}.\]

 The subset  $\Delta:=\{\lambda \in\mathfrak{t^{*}}: V_{\lambda}\neq 0\}\subseteq \mathfrak{t^{*}}$ is called the  $\mathfrak{t}$-\textit{weight} of $V$.  

\begin{lemma}\label{dvdg1} If $t$ is a toral element of $\mathfrak{t}$, then $\lambda(t)\in \mathbb{F}_{2}$, for any  $\lambda\in\Delta $.\end{lemma}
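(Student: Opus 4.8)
The plan is to read off $\lambda(t)$ as an eigenvalue of the operator $\rho_V(t)$ and to exploit that a toral element is idempotent under the $2$-map. First I would fix an arbitrary $\lambda\in\Delta$ and choose a nonzero weight vector $v\in V_{\lambda}$, so that by definition $\rho_V(t)(v)=v\cdot t=\lambda(t)\,v$. Since $t$ is toroidal we have $t^{[2]}=t$, and because $V$ is a $(\mathfrak{g},[2])$-module the map $\rho_V$ satisfies $\rho_V(x^{[2]})=\rho_V(x)^{2}$ for all $x\in\mathfrak{g}$; applying this to $x=t$ gives the operator identity $\rho_V(t)^{2}=\rho_V(t)$.

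Evaluating this identity on $v$ yields $\lambda(t)^{2}v=\rho_V(t)^{2}(v)=\rho_V(t)(v)=\lambda(t)\,v$, and since $v\neq 0$ we conclude $\lambda(t)^{2}=\lambda(t)$ in $K$. As $K$ is a field, the polynomial $X^{2}-X=X(X-1)$ has only the roots $0$ and $1$, so $\lambda(t)\in\{0,1\}$. Recalling that $K$ contains the prime field $\mathbb{F}_{2}=\{0,1\}$ (as assumed at the start of the section), this says precisely $\lambda(t)\in\mathbb{F}_{2}$, which is the claim.

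There is no real obstacle here; the argument is a one-line consequence of the $2$-module axiom. The only points that need to be stated carefully are that the restriction $\rho_V|_{\mathfrak{t}}$ is genuinely a $2$-representation (so that $\rho_V(t)^2=\rho_V(t^{[2]})$ is legitimate), which is part of the hypothesis on $V$, and that the weight vector $v$ can be taken nonzero, which is exactly the condition $V_{\lambda}\neq 0$ defining membership in $\Delta$. One could alternatively phrase the same fact by saying that $\rho_V(t)$ is semisimple with minimal polynomial dividing $X^{2}-X$, hence diagonalizable with eigenvalues in $\mathbb{F}_{2}$, but the direct computation above is the most economical.
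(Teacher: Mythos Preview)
Your proof is correct and follows essentially the same approach as the paper: both pick a nonzero $v\in V_{\lambda}$, use $t^{[2]}=t$ together with the $2$-module axiom $\rho_V(t^{[2]})=\rho_V(t)^2$ to obtain $\lambda(t)^2 v=\lambda(t)v$, and conclude $\lambda(t)\in\{0,1\}=\mathbb{F}_2$. The paper's version is simply the compressed one-line form of your argument.
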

\begin{proof}If $  v\in V\setminus \{0\}$, then  \[ \lambda(t)v=v \cdot t=v\cdot t^{[2]}=(v\cdot t)\cdot t=(\lambda(t)v)\cdot t=\lambda(t)(v\cdot t)=\lambda(t)^2v.\] \, So,  $\lambda(t)\in\mathbb{F}_2$.
\end{proof}

If $V =\mathfrak{g}$ is the adjoint  $(\mathfrak{g},[2])$-module, then $\Delta=\{\lambda \in\mathfrak{t^{*}}: \mathfrak{g}_{\lambda}\neq 0\}$ is nothing but the set of roots of $\mathfrak{g}$ respect to $\mathfrak{t}$, and
$$ \mathfrak{g}=\mathfrak{h}\oplus\left(\underset{\alpha\in\Delta}\oplus \mathfrak{g}_\alpha\right) \quad\text{ where }\quad \mathfrak{g}_\alpha:=\{g\in \mathfrak{g}:\text{ad}(h)(g)=\alpha(h)g,\, h\in \mathfrak{h}\}$$ is the root space decomposition. 


\section{Simple Lie $2$-algebra with toral rank $3$ and $dim_{K}\leq 6$}

We will obtain in Proposition \ref{payares}  that there are no simple Lie $2$-algebras of $\text{dim}_{K}\leq 6$ with toral rank $3$. Next section will be spend to find the Cartan decomposition for    $\text{dim}_{K}\geq 7$, which will be very useful throughout the work. 

\medskip

Suppose that there exists a  simple Lie $2$-algebra $(\mathfrak{g}, [2])$ of $\text{dim}_{K}(\mathfrak{g})\leq 6$. In particular, $\mathfrak{g}$ is a simple Lie algebra on $K$ of $\text{dim}_{K}(\mathfrak{g}) \leq 6$ and $\mathfrak{g}$ is not isomorphic to the Witt algebra $W (1;\textbf{\underline{1}})$ (otherwise,  $W(1;\textbf{\underline{1}})$ is simple over $K$, which is absurd). Then, by \cite{kl}, Theorem 2.2, we have that $\mathfrak{g}$ is of dimension 3 and so $\mathfrak{g}$ is isomorphic to $\mathfrak{o}_{3}(K)$. However $\mathfrak{o}_{3}(K)$ is not a Lie $2$-algebra. This facts proves that:

\begin{proposition}\label{payares}
There are no simple Lie $2$-algebras of $\text{dim}_{K}\leq 6$ with toral rank $3$.
\end{proposition}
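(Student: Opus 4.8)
The plan is to reduce the statement to a citation about simple Lie algebras of small dimension in characteristic $2$, exactly as the paragraph preceding the proposition already suggests, and then to rule out the only surviving candidate by an explicit structural argument. So first I would suppose, for contradiction, that $(\mathfrak{g},[2])$ is a simple Lie $2$-algebra over $K$ with $\dim_K(\mathfrak{g})\le 6$ and $MT(\mathfrak{g})=3$. The underlying $\mathfrak{g}$ is then a finite-dimensional simple Lie algebra over the algebraically closed field $K$ of characteristic $2$ with $\dim_K(\mathfrak{g})\le 6$. Since $K$ is not the prime field, $\mathfrak{g}$ cannot be the Witt algebra $W(1;\underline{\mathbf 1})$ (which has dimension $2^1=2$ over its field of definition but is only simple over $\mathbb F_2$, not after base change). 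Invoking \cite{kl}, Theorem 2.2 — the classification of low-dimensional simple Lie algebras in characteristic $2$ — the only possibility left is $\dim_K(\mathfrak{g})=3$, whence $\mathfrak{g}\cong\mathfrak{o}_3(K)$, the algebra described in the Example.

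The second step is to observe that $\mathfrak{o}_3(K)$ carries no $2$-map, i.e.\ it is not a Lie $2$-algebra at all. Concretely, one checks on the canonical basis $e_1,e_2,e_3$ that there is no map $[2]\colon\mathfrak{o}_3(K)\to\mathfrak{o}_3(K)$ satisfying $\operatorname{ad}(x^{[2]})=\operatorname{ad}(x)^2$ together with the semilinearity axiom: for instance $\operatorname{ad}(e_1)^2$ acts as the identity on $\mathrm{span}\{e_2,e_3\}$ and as $0$ on $e_1$, so it is not of the form $\operatorname{ad}(y)$ for any $y$ because every inner derivation of this $3$-dimensional algebra is visibly not a projection of that shape; pushing the axioms through the addition formula $(a+b)^{[2]}=a^{[2]}+b^{[2]}+[a,b]$ then forces a contradiction. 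This is the content of the last sentence of the Example, so I would simply cite it. Either way, $\mathfrak{o}_3(K)$ being non-$2$ contradicts the assumption that $(\mathfrak{g},[2])$ is a Lie $2$-algebra, and the proposition follows.

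I do not expect a serious obstacle here: the proof is essentially a two-line deduction resting on two external/earlier facts, namely the classification \cite{kl}, Theorem 2.2 (dimension $\le 6$ simple Lie algebras in characteristic $2$ are either $\mathfrak{o}_3(K)$-type of dimension $3$ or Witt-type $W(1;\underline{\mathbf 1})$) and the Example's observation that $\mathfrak{o}_3(K)$ admits no $2$-structure. The only point that deserves a word of care is the exclusion of $W(1;\underline{\mathbf 1})$: one must note that over the algebraically closed field $K\supsetneq\mathbb F_2$ this algebra is no longer simple (it acquires a proper ideal after extension of scalars), so it does not appear on the list of $K$-simple algebras; this is precisely the parenthetical remark in the paragraph before the proposition. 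Granting those two inputs, the argument is complete and requires no computation beyond what is already displayed in the Example.

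\begin{proof}
Suppose, towards a contradiction, that there is a simple Lie $2$-algebra $(\mathfrak{g},[2])$ over $K$ with $\dim_K(\mathfrak{g})\le 6$ and $MT(\mathfrak{g})=3$. Then $\mathfrak{g}$ is a finite-dimensional simple Lie algebra over the algebraically closed field $K$ of characteristic $2$ with $\dim_K(\mathfrak{g})\le 6$, and $\mathfrak{g}$ is not isomorphic to the Witt algebra $W(1;\textbf{\underline{1}})$, since the latter is not simple after extension of scalars to $K$. By \cite{kl}, Theorem 2.2, it follows that $\dim_K(\mathfrak{g})=3$, and hence $\mathfrak{g}\cong\mathfrak{o}_3(K)$. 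But $\mathfrak{o}_3(K)$ admits no $2$-map, so it is not a Lie $2$-algebra, a contradiction. Therefore no such $(\mathfrak{g},[2])$ exists.
\end{proof}
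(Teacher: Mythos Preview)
Your proposal is correct and follows essentially the same argument as the paper: assume a simple Lie $2$-algebra of dimension at most $6$, exclude $W(1;\underline{\mathbf{1}})$ because it is not simple over $K$, invoke \cite{kl}, Theorem~2.2 to force $\dim_K(\mathfrak{g})=3$ and hence $\mathfrak{g}\cong\mathfrak{o}_3(K)$, and finish by citing the Example that $\mathfrak{o}_3(K)$ admits no $2$-map. The only addition on your side is the sketched verification that $\mathfrak{o}_3(K)$ has no $2$-structure, which the paper simply takes from the Example.
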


\section{Cartan decomposition of a Lie $2$-algebra  of rank toral $3$.}
As we said in Section 2, in this section we will suppose that   $ (\mathfrak{g}, [2]) $ is a   Lie $2$-algebra with $  {MT} (\mathfrak{g})=3 $ and we will find its Cartan decomposition.   We have $ \mathfrak{g} $ contains a maximal torus $ \mathfrak{t} $ of $ \text{dim}_{K} (\mathfrak{t}) = 3 $.  
Since $ \text{ad} (\mathfrak{t}) $ is abelian and consists of semisimple elements, $ \mathfrak{g} $ can be decomposed into  weights spaces respect to $ \mathfrak{t} $, that is
$$\mathfrak{g}=\mathfrak{c}_{\mathfrak{g}}(\mathfrak{t})\oplus \left(\underset{\lambda\in\ {\mathfrak{t}}^{*}\setminus\{0\}}\oplus \mathfrak{g}_\lambda\right) \quad \text{where}\quad  \mathfrak{g}_\lambda=\{v\in \mathfrak{g} : [t,v]= \lambda(t)v,\,\,\forall t\in \mathfrak{t}\}.$$
Now, since $ \mathfrak{t} $ is a torus of dimension 3, there is a basis $\{t_i\in \mathfrak{t}: t^{[2]}_i  = t_i, i = 1, 2, 3 \}$ of $\mathfrak{t}$ such that $\lambda(t_i) \in \mathbb{F}_{2}$,  $\forall\lambda\in \mathfrak{t}^{*}$ (Lemma \ref{dvdg1}). Hence, for any $\lambda:\mathfrak{t}\to \mathbb{F}_2 $,   writing 
$\lambda=(\lambda(t_{1}),\lambda(t_{2}),\lambda(t_{3}))$, 
we have  \[ \mathfrak{t}^{*}= \{(0,0,0),(1,0,0),(0,1,0),(0,0,1),(1,1,0),(1,0,1),(0,1,1),(1,1,1)\}.\]

Set $\alpha=(1,0,0) $, $\beta=(1,0,0)$, $\gamma= (0,0,1) $ and $\mathfrak{h}=\mathfrak{c}_ {\mathfrak{g}}(\mathfrak{t})$. Thus $\{\alpha, \beta, \gamma \} $ is a basis of $\mathfrak{t^{*}}$ and $\mathfrak{h}$ is a Cartan subalgebra of $\mathfrak{g}$. Thus, $\mathfrak{h} =\mathfrak{t}\oplus \mathfrak{n} $, where $ \mathfrak{n} $ is a $2$-nilpotent  subalgebra  of $\mathfrak{g} $ and $ [\mathfrak{t},\mathfrak{n}] = 0. $
Therefore:

\begin{theorem}\label{patron}
The decomposition into weights spaces of $ (\mathfrak{g},[2])$ with respect to $\mathfrak{t}$ is: \[\mathfrak{g}=\mathfrak{t} \oplus \mathfrak{n} \oplus \mathfrak{g}_{\alpha}\oplus\mathfrak{g}_{\beta} \oplus\mathfrak{g}_{\gamma}\oplus\mathfrak{g}_{\alpha + \beta}\oplus\mathfrak{g}_{\alpha+ \gamma}\oplus \mathfrak{g}_{\beta+\gamma}\oplus\mathfrak{g}_{\alpha + \beta + \gamma}\] or \[\mathfrak{g}=\mathfrak{t} \oplus \mathfrak{n}\oplus\left(\underset{\xi\in G}\oplus \mathfrak{g}_\xi\right),\] where $G:=\left\langle \alpha,\beta,\gamma\right\rangle$ is an elementary abelian group of order $8$ and $ \mathfrak{n} $ is a $2$-nilpotent  subalgebra  of $\mathfrak{g} $.
\end{theorem}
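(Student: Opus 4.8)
The plan is to unpack the definitions that have just been set up and observe that Theorem \ref{patron} is essentially a restatement of the weight-space decomposition combined with Lemma \ref{dvdg1}. First I would recall that since $\mathfrak{t}$ is a maximal torus of dimension $3$, the adjoint action $\mathrm{ad}(\mathfrak{t})$ is abelian and consists of semisimple operators, so $\mathfrak{g}$ decomposes as $\mathfrak{g}=\mathfrak{c}_{\mathfrak{g}}(\mathfrak{t})\oplus\bigl(\bigoplus_{\lambda\in\mathfrak{t}^{*}\setminus\{0\}}\mathfrak{g}_{\lambda}\bigr)$, exactly as displayed before the theorem. The point to emphasize is that a priori $\lambda$ ranges over the full dual space $\mathfrak{t}^{*}$, which is infinite, but Lemma \ref{dvdg1} forces every root to take values in $\mathbb{F}_{2}$ on the toroidal basis $\{t_{1},t_{2},t_{3}\}$; hence every root is determined by the triple $(\lambda(t_{1}),\lambda(t_{2}),\lambda(t_{3}))\in\mathbb{F}_{2}^{3}$, and there are at most $8$ such triples.

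Next I would make the identification of the relevant index set with the group $G=\langle\alpha,\beta,\gamma\rangle$. With $\alpha=(1,0,0)$, $\beta=(0,1,0)$, $\gamma=(0,0,1)$ (reading the statement's $\beta=(1,0,0)$ as a typo for $(0,1,0)$), the triple $\{\alpha,\beta,\gamma\}$ is a basis of $\mathfrak{t}^{*}$ over $\mathbb{F}_{2}$, so every nonzero functional $\mathfrak{t}\to\mathbb{F}_{2}$ is one of $\alpha,\beta,\gamma,\alpha+\beta,\alpha+\gamma,\beta+\gamma,\alpha+\beta+\gamma$, and together with $0$ these form the elementary abelian $2$-group of order $8$. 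This is precisely the enumeration of $\mathfrak{t}^{*}$ written out just before the theorem. Collecting the weight spaces according to this list yields the first displayed decomposition; rewriting it compactly with the understanding that $\mathfrak{g}_{\xi}=0$ is permitted for some $\xi\in G$ gives the second form.

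Finally I would account for the zero weight space: by definition $\mathfrak{c}_{\mathfrak{g}}(\mathfrak{t})=\mathfrak{h}$ is a Cartan subalgebra, and since $\mathfrak{t}$ is a maximal torus sitting inside $\mathfrak{h}$ with the semisimple part of $\mathfrak{h}$ exactly equal to $\mathfrak{t}$, we may write $\mathfrak{h}=\mathfrak{t}\oplus\mathfrak{n}$ where $\mathfrak{n}$ consists of the $2$-nilpotent part; that $\mathfrak{n}$ is a subalgebra with $[\mathfrak{t},\mathfrak{n}]=0$ and that it is $2$-nilpotent follows from the structure theory of Cartan subalgebras of Lie $2$-algebras recalled in Section 1 (the semisimple elements of a Cartan subalgebra lie in its center and form a maximal torus). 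Substituting $\mathfrak{h}=\mathfrak{t}\oplus\mathfrak{n}$ into the $0$-component of the weight decomposition produces the stated formula. There is no serious obstacle here: the only subtlety is being careful that $\mathfrak{n}$ is genuinely a $2$-nilpotent subalgebra rather than merely a complement, which is where the cited facts about Cartan subalgebras and the Jordan--Chevalley--Seligman decomposition are invoked; everything else is bookkeeping with the $\mathbb{F}_{2}$-linear algebra of $\mathfrak{t}^{*}$.
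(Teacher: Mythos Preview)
Your proposal is correct and follows essentially the same route as the paper: the theorem is simply a restatement of the preceding discussion, combining the general weight-space decomposition with Lemma~\ref{dvdg1} to reduce the index set to $\mathbb{F}_{2}^{3}$, and then splitting $\mathfrak{h}=\mathfrak{c}_{\mathfrak{g}}(\mathfrak{t})$ as $\mathfrak{t}\oplus\mathfrak{n}$ using the Cartan-subalgebra facts from Section~1. You even caught the typo in the definition of $\beta$.
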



Using the decomposition in Theorem \ref{patron}, we show in the next proposition that there exist at least three non-zero weight spaces. This fact leads us to conclude that the dimension of $ \mathfrak{g}$ must be greater than or equal to 6, since the dimension of $  \mathfrak{h}$  is greater than or equal to 3.

\begin{proposition}\label{wefef}
There are  $\xi$, $\delta$, $\theta$ in $\mathfrak{t} ^{\ast}$ linearly independent such that $ \mathfrak{g}_\xi \neq 0 $, $ \mathfrak{g}_ \delta\neq0 $ and $ \mathfrak{g}_ \theta \neq0 .$
\end{proposition}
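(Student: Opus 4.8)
The plan is to argue by contradiction, assuming that the non-zero weight spaces of $\mathfrak{g}$ span a subspace of $\mathfrak{t}^*$ of dimension at most $2$. First I would recall that $\mathfrak{g}$ is simple, so it equals its own derived algebra; in particular $\mathfrak{g}$ cannot be nilpotent, and hence cannot coincide with $\mathfrak{h}$. Therefore at least one root space $\mathfrak{g}_\xi$ with $\xi\neq 0$ is non-zero, which already gives one non-zero weight. Next I would rule out the possibility that all non-zero weights are proportional, i.e.\ that there is a single line $K\xi\subseteq\mathfrak{t}^*$ (equivalently, in our $\mathbb{F}_2$ setting, a single non-zero functional $\xi$) carrying all root spaces: in that case $\mathfrak{g}=\mathfrak{h}\oplus\mathfrak{g}_\xi$, and the kernel of $\xi$ in $\mathfrak{t}$ — which is $2$-dimensional since $\xi\neq 0$ — would be a central ideal-like obstruction. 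More precisely, $\ker\xi\subseteq\mathfrak{t}$ commutes with $\mathfrak{h}$ and with $\mathfrak{g}_\xi$, hence lies in $\mathfrak{z}(\mathfrak{g})=0$, forcing $\dim\ker\xi\le 0$, contradicting $\dim\ker\xi=2$.

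The heart of the argument is the remaining case: suppose the non-zero weights all lie in a $2$-dimensional subspace $W=\langle\xi,\delta\rangle\subseteq\mathfrak{t}^*$ but do not all lie on one line. I would then pick $t_0\in\mathfrak{t}$ annihilated by every weight in $W$ — such $t_0\neq 0$ exists because $\dim\mathfrak{t}=3>2=\dim W$, so $\bigcap_{\mu\in W}\ker\mu$ is non-trivial. This $t_0$ satisfies $[t_0,\mathfrak{g}_\mu]=\mu(t_0)\mathfrak{g}_\mu=0$ for every root $\mu$, and also $[t_0,\mathfrak{h}]=0$ since $\mathfrak{h}=\mathfrak{c}_{\mathfrak{g}}(\mathfrak{t})$. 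Hence $t_0\in\mathfrak{z}(\mathfrak{g})=0$, a contradiction. Thus the weights must span all of $\mathfrak{t}^*$, so in particular one can extract three linearly independent functionals $\xi,\delta,\theta$ with non-zero weight spaces.

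The main obstacle I anticipate is purely bookkeeping: making sure the argument uses only that $\mathfrak{g}$ is \emph{simple} (hence centerless, since $\mathfrak{z}(\mathfrak{g})$ is a proper ideal and simplicity forces it to be $0$) together with the root-space decomposition of Theorem~\ref{patron}, and not any finer structure. The one subtlety is that over $\mathbb{F}_2$ "proportional" and "equal" coincide for non-zero functionals, so the dimension-counting is genuinely about $\mathbb{F}_2$-dimension of the span of the weight set inside $\mathfrak{t}^*$; I would phrase everything in those terms, using $\dim_{\mathbb{F}_2}\mathfrak{t}^*=3$ and the fact that a functional vanishing on a spanning set of weights, together with vanishing on $\mathfrak{h}$, produces a central element. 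Once the center is shown to be zero this is immediate, and the three desired independent roots $\xi,\delta,\theta$ are obtained by choosing a basis of $\mathfrak{t}^*$ from among the weights.
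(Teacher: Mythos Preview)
Your proposal is correct and takes a somewhat different, more transparent route than the paper. The paper argues by contradiction in two cases, in each case applying a $GL_3(\mathbb{F}_2)$ change of basis to normalise the root set into $\{\alpha\}$ or $\{\alpha,\beta,\alpha+\beta\}$ and then asserting that this forces the toral rank down to $1$ or $2$, contradicting $MT(\mathfrak{g})=3$. You instead pick a non-zero $t_0\in\mathfrak{t}$ annihilated by every root (which exists whenever the roots span at most a $2$-dimensional subspace of $\mathfrak{t}^*$), observe that $t_0$ commutes with $\mathfrak{h}$ and with each $\mathfrak{g}_\mu$, hence lies in $\mathfrak{z}(\mathfrak{g})=0$, and reach the contradiction directly from simplicity.

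The two arguments rest on the same underlying fact---a direction in $\mathfrak{t}$ invisible to all roots must be central---but your version makes the dependence on simplicity explicit, whereas in the paper this is hidden inside the claim that the toral rank drops (a claim that really needs $t_3$ to be central, hence zero, to go through). Your approach avoids the $GL_3(\mathbb{F}_2)$ bookkeeping entirely and is shorter; the paper's approach has the minor advantage of setting up the normalised basis $\{\alpha,\beta,\gamma\}$ that is used later in Remark~\ref{obs33}, but that normalisation can be done separately.
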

\begin{proof}
 Suppose the proposition is false. We have two cases:

\noindent \textbf{Case 1:}  There are $\xi$, $ \delta $ and $ \mathfrak{t}^{*} $ linearly independent such that $\mathfrak{g}_\xi \neq0 $ and $ \mathfrak{g}_ \delta \neq 0 $. 

In this case, we can  extend $\{\xi,\delta\}$ to a basis of $\mathfrak{t}^{*}$. Thus there is $\lambda\in \mathfrak{t}^{*}\backslash\{0\} $ such that $\{\xi, \delta,\lambda \}\subseteq \mathfrak{t}^{*}\backslash\{0 \}$ is a linearly independent set and $\mathfrak{g}_\lambda = 0 $. By the action of the linear group $ GL_{3}(\mathbb{F}_2)$ on $ \mathfrak{t}^{*}$ we can consider the following change of basis  given by
\[
 \begin{array}{rcl} 
{GL_{3}(\mathbb{F}_2)\times \mathfrak{t}^{*}} & \xrightarrow{\                                            
\psi_A\ } & \mathfrak{t}^{*}\\ 
 (A,\xi) & \mapsto & A \cdot \xi:=\alpha\\
(A,\delta) & \mapsto & A \cdot \delta:=\beta\\
(A,\lambda) & \mapsto & A \cdot \lambda:=\gamma\\
\end{array}
\]
Thus, we find that $\mathfrak{g}=\mathfrak{h}\oplus \mathfrak{g}_{\alpha} \oplus \mathfrak{g}_{\beta}\oplus \mathfrak{g}_{\alpha+\beta}.$ Consequently, up to change of basis, $ \mathfrak{g} $ can be written as  $$\mathfrak{g}=\mathfrak{h}\oplus \mathfrak{g}_{\varepsilon} \oplus \mathfrak{g}_{\mu}\oplus \mathfrak{g}_{\varepsilon+\mu},$$ where $\{\varepsilon,\mu,\mu+\varepsilon\}$
is a linearly dependent set of $ \mathfrak{t}^{*} $. Hence, $ \mathfrak{g} $ has  total range  $ 2 $, which is a contradiction.

\medskip

\noindent \textbf{Case 2:}  There is $ \xi \in \mathfrak{t}^{*}$ such that $\mathfrak{g}_ \xi\neq0 $.
 
We extend $\{\xi \}\subseteq \mathfrak{t}^{*}\backslash\{0 \} $ to a basis of $ \mathfrak{t}^{*}$, so for $\pi,\,\kappa\in\mathfrak{t}^{*}\setminus\{0 \} $,  $ \{\xi, \pi, \kappa\} $ is a linearly independent set such that $ \mathfrak{g}_{\xi} \neq 0 $ and $ \mathfrak{g}_{\pi}=\mathfrak{g}_{\kappa} = 0. $ Then, by the action of $ GL_3 (\mathbb{F}_2) $ on $ \mathfrak{t}^{*}$, we can make a change of basis, that is, for $A\in GL_3 (\mathbb{F}_2 ) $ fixed, we have
\[
 \begin{array}{rcl} 
{GL_3(\mathbb{F}_2)\times \mathfrak{t}^{*}} & \xrightarrow{\                                            
\psi_A\ } & \mathfrak{t}^{*}\\ 
(A,\xi) & \mapsto & A.\xi:=\alpha\\
(A,\pi) & \mapsto & A.\pi:=\beta\\
(A,\kappa) & \mapsto & A.\kappa:=\gamma.
\end{array}
\]

Therefore,  up to   change of basis, $\mathfrak{g}$ can be written as  $\mathfrak{g}=\mathfrak{h}\oplus \mathfrak{g}_{\varepsilon}$ for some $\varepsilon\in\mathfrak{t}^{*}\setminus\{0\} $. In this case, we have $\mathfrak{g}$ has toral rank $1$, which is a contradiction.
\end{proof}

It follows from Proposition \ref{wefef} that: 

\begin{corollary}\label{cor33} $\text{dim}_{K}(\mathfrak{g})\geq 6.$
\end{corollary}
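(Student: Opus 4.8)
The plan is to read off the inequality directly from the weight-space decomposition, using the three independent roots supplied by Proposition~\ref{wefef}. First I would recall that $\mathfrak{h}=\mathfrak{c}_{\mathfrak{g}}(\mathfrak{t})=\mathfrak{t}\oplus\mathfrak{n}$ contains the torus $\mathfrak{t}$, which has $\dim_{K}(\mathfrak{t})=3$; hence $\dim_{K}(\mathfrak{h})\geq 3$. Next, apply Proposition~\ref{wefef} to obtain $\xi,\delta,\theta\in\mathfrak{t}^{\ast}$ linearly independent with $\mathfrak{g}_{\xi}\neq 0$, $\mathfrak{g}_{\delta}\neq 0$, $\mathfrak{g}_{\theta}\neq 0$. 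Linear independence forces these three functionals to be pairwise distinct and all nonzero, so $\mathfrak{h}$, $\mathfrak{g}_{\xi}$, $\mathfrak{g}_{\delta}$, $\mathfrak{g}_{\theta}$ are four distinct summands appearing in the decomposition $\mathfrak{g}=\mathfrak{h}\oplus\bigl(\bigoplus_{\lambda\in\Delta}\mathfrak{g}_{\lambda}\bigr)$ of Theorem~\ref{patron}.

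Since the sum is direct and each of the three weight spaces is nonzero (hence of $K$-dimension at least $1$), I would conclude
\[
\dim_{K}(\mathfrak{g})\;\geq\;\dim_{K}(\mathfrak{h})+\dim_{K}(\mathfrak{g}_{\xi})+\dim_{K}(\mathfrak{g}_{\delta})+\dim_{K}(\mathfrak{g}_{\theta})\;\geq\;3+1+1+1\;=\;6,
\]
which is the claim. There is no real obstacle here: the only point requiring a word of care is that the three weights must be genuinely distinct from one another and from $0$ so that the corresponding root spaces are disjoint summands, and this is exactly what linear independence of $\xi,\delta,\theta$ guarantees.
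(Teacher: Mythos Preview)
Your argument is correct and matches the paper's proof essentially verbatim: the paper also invokes $\dim_{K}(\mathfrak{h})\geq 3$ together with Proposition~\ref{wefef} to obtain three nonzero root spaces and adds up to get $3+1+1+1=6$. Your added remark that linear independence makes $\xi,\delta,\theta$ pairwise distinct and nonzero (so the summands are genuinely disjoint) is a welcome bit of extra care that the paper leaves implicit.
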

\begin{proof}
Since $\text{dim}_{K}(\mathfrak{h})\geq 3$   and by Proposition \ref{wefef} there exist $\theta$, $\delta$ and $\xi$ in $\mathfrak{t}^{\ast}$ linearly independent such that $ \text{dim}_{K}(\mathfrak{g}_\xi)\geq 1 $, $ \text{dim}_{K}(\mathfrak{g}_ \delta)\geq1 $ and $\text{dim}_{K}( \mathfrak{g}_ \theta) \geq1 ,$ we have $\text{dim}_{K}(\mathfrak{g})\geq 6.$
\end{proof}

\begin{remark}\label{obs33} By Theorem \ref{patron},  Proposition \ref{wefef} and Corollary \ref{cor33},  from now on we will assume, without loss of generality, that $\text{dim}_{K}(\mathfrak{g})\geq 6$ and the Cartan decomposition into weight spaces of $\mathfrak{g}$ is  $$\mathfrak{g} =\mathfrak{t}\oplus \mathfrak{n} \oplus\mathfrak{g}_{\alpha}\oplus \mathfrak{g}_{\beta}\oplus\mathfrak{g}_{\gamma} \oplus\mathfrak{g}_{\alpha+\beta}\oplus \mathfrak{g}_{\alpha+\gamma}\oplus \mathfrak{g}_{\beta+\gamma}\oplus \mathfrak{g}_{\alpha+\beta+\gamma},$$
where $\{\alpha,\beta,\gamma\}$ is a basis of $\mathfrak{t}^{*}$ such that $\mathfrak{g}_{\alpha}\neq0$, $\mathfrak{g}_{\beta}\neq0$, and  $\mathfrak{g}_{\gamma}\neq0$,  with  $$\text{dim}_K(\mathfrak{g}_{\alpha})\geq \text{dim}_K(\mathfrak{g}_{\beta})\geq \text{dim}_K(\mathfrak{g}_{\gamma})\geq \text{dim}_K(\mathfrak{g}_{\xi})> 0, \quad \forall\,\xi\in \mathfrak{t}^{*}\backslash\{\alpha,\beta,\alpha+\beta\}.$$
\end{remark}

Let $ \Delta:= \{\lambda \in\mathfrak{t}^{*}: \mathfrak{g}_{\lambda}\neq0 \} $ be the root system of $ \mathfrak{g} $ with respect to $ \mathfrak{t} $. It follows from Theorem \ref{patron} that   $\alpha$, $\beta$ and $\gamma$  are elements of $\Delta $.  The following are all the  possibilities for   $ \Delta$ depending on its cardinality.  
\begin{enumerate}[1.]
\item If $\text{Card}(\Delta)=3$, we have $\Delta_1=\{\alpha,\beta,\gamma\}$.
\item If $\text{Card}(\Delta)=4$,  we have the following possibilities for $\Delta$: 
\begin{enumerate}[a.]
  \item $\Delta_{2}=\{\alpha,\beta,\gamma,\alpha+\beta\}$
  \item $\Delta_{3}=\{\alpha,\beta,\gamma,\alpha+\gamma\}$ 
  \item $\Delta_{4}=\{\alpha,\beta,\gamma,\beta+\gamma\}$
  \item $\Delta_{5}=\{\alpha,\beta,\gamma,\alpha+\beta+\gamma\}$
\end{enumerate}
\item If $\text{Card}(\Delta)=5$, we have the following possibilities for $\Delta$:
\begin{enumerate}[a.]
\item$\Delta_{6}=\{\alpha,\beta,\gamma,\alpha+\beta,\alpha+\gamma\}$ 
\item$\Delta_{7}=\{\alpha,\beta,\gamma,\alpha+\beta,\beta+\gamma\}$
\item$\Delta_{8}=\{\alpha,\beta,\gamma,\alpha+\beta,\alpha+\beta+\gamma\}$ 
\item$\Delta_{9}=\{\alpha,\beta,\gamma,\alpha+\gamma,\beta+\gamma\}$
\item$\Delta_{10}=\{\alpha,\beta,\gamma,\alpha+\gamma,\alpha+\beta+\gamma\}$
\item$\Delta_{11}=\{\alpha,\beta,\gamma,\beta+\gamma,\alpha+\beta+\gamma\}$ 
\end{enumerate}
\item If $\text{Card}(\Delta)=6$, we have the following possibilities for $\Delta$:
\begin{enumerate}[a.]
\item$\Delta_{12}=\{\alpha,\beta,\gamma,\alpha+\beta,\alpha+\gamma,\beta+\gamma\}$ 
\item$\Delta_{13}=\{\alpha,\beta,\gamma,\alpha+\beta,\alpha+\gamma,\alpha+\beta+\gamma\}$
\item$\Delta_{14}=\{\alpha,\beta,\gamma,\alpha+\beta,\beta+\gamma,\alpha+\beta+\gamma\}$
\item$\Delta_{15}=\{\alpha,\beta,\gamma,\alpha+\gamma,\beta+\gamma,\alpha+\beta+\gamma\}$ 
\end{enumerate}
\item If $\text{Card}(\Delta)=7,$ then $\Delta_0=\mathfrak{t}^*\backslash\{0\}$.
\end{enumerate}

For $0\leq i\leq 15,$ we will denote by $(\mathfrak{g}^{\Delta_{i}},[2])$ the Lie $2$-algebra with its associated root space $\Delta_{i}$ in the previous list. 

\medskip

 Using the above facts we can   conclude the following theorem:

  \begin{theorem}  For each  $0 \leq i \leq  15$, we have   the  Cartan decompositions of  $(\mathfrak{g}^{\Delta_{i}},[2])$ with respect to $ \mathfrak{h} $ is given by  \begin{equation}\label{jeo1} \mathfrak{g}^{\Delta_i}=\mathfrak{t} \oplus\mathfrak{n} \oplus\left(\underset{\xi\in\Delta_i}\oplus\mathfrak{g}_\xi
\right).\end{equation}\end{theorem}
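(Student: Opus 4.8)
The plan is to obtain the statement as an immediate consequence of Theorem~\ref{patron} and the very definition of $(\mathfrak{g}^{\Delta_i},[2])$; the only point that genuinely needs a word of justification is that the decomposition of $\mathfrak{g}$ into $\mathfrak{t}$-weight spaces already coincides with the Cartan (root space) decomposition relative to $\mathfrak{h}=\mathfrak{c}_\mathfrak{g}(\mathfrak{t})$.

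First I would invoke Theorem~\ref{patron}, in the normalized form recorded in Remark~\ref{obs33}, to write the $\mathfrak{t}$-weight decomposition
\[
\mathfrak{g}=\mathfrak{t}\oplus\mathfrak{n}\oplus\Bigl(\underset{\xi\in \mathfrak{t}^{*}\setminus\{0\}}\oplus\mathfrak{g}_\xi\Bigr),\qquad \mathfrak{h}=\mathfrak{t}\oplus\mathfrak{n},\quad [\mathfrak{t},\mathfrak{n}]=0,
\]
with $\mathfrak{n}$ a $2$-nilpotent subalgebra. Then I would check that each $\mathfrak{g}_\xi$ is $\text{ad}(\mathfrak{h})$-stable: for $h\in\mathfrak{h}$, $t\in\mathfrak{t}$, $x\in\mathfrak{g}_\xi$ the Jacobi identity gives $[t,[h,x]]=[[t,h],x]+[h,[t,x]]=\xi(t)[h,x]$, so $[h,x]\in\mathfrak{g}_\xi$. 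Writing $h=t+n$ with $t\in\mathfrak{t}$, $n\in\mathfrak{n}$, the operator $\text{ad}(t)$ acts on $\mathfrak{g}_\xi$ as the scalar $\xi(t)$, while $\text{ad}(n)$ is nilpotent (since $n$ is $2$-nilpotent, $\text{ad}(n)^{2^k}=\text{ad}(n^{[2]^k})=0$ for suitable $k$ by axiom (3)) and commutes with $\text{ad}(t)$ because $[\mathfrak{t},\mathfrak{n}]=0$; hence $\text{ad}(h)-\xi(t)\,\mathrm{id}=\text{ad}(n)$ is nilpotent on $\mathfrak{g}_\xi$. Therefore $\mathfrak{g}_\xi$ is precisely the root space of $\mathfrak{g}$ relative to $\mathfrak{h}$ attached to the functional $\bar\xi\in\mathfrak{h}^{*}$ determined by $\bar\xi|_{\mathfrak{t}}=\xi$ and $\bar\xi|_{\mathfrak{n}}=0$, and the displayed decomposition is the Cartan decomposition with respect to $\mathfrak{h}$.

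Finally I would unwind the definition of $(\mathfrak{g}^{\Delta_i},[2])$: its root system with respect to $\mathfrak{t}$ is $\Delta_i$, that is, among the nonzero $\xi\in\mathfrak{t}^{*}$ one has $\mathfrak{g}_\xi\neq 0$ exactly for $\xi\in\Delta_i$. Substituting $\mathfrak{g}_\xi=0$ for each nonzero $\xi\notin\Delta_i$ into the decomposition above collapses it to $\mathfrak{g}^{\Delta_i}=\mathfrak{t}\oplus\mathfrak{n}\oplus\bigl(\underset{\xi\in\Delta_i}\oplus\mathfrak{g}_\xi\bigr)$, which is \eqref{jeo1}. I do not anticipate any real obstacle here: the argument is formal once Theorem~\ref{patron} is in hand, and the only step demanding a line of proof is the identification of $\mathfrak{t}$-weight spaces with $\mathfrak{h}$-root spaces, which rests solely on $\mathfrak{n}$ being $2$-nilpotent and centralizing $\mathfrak{t}$. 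For completeness one may also observe that the sixteen sets $\Delta_0,\dots,\Delta_{15}$ are exactly the subsets of $\mathfrak{t}^{*}\setminus\{0\}$ containing $\{\alpha,\beta,\gamma\}$, which by Remark~\ref{obs33} is forced, so the enumeration is exhaustive.
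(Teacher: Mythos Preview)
Your proposal is correct and follows the same approach as the paper, which simply states ``Using the above facts we can conclude the following theorem'' without an explicit proof. Your write-up is in fact more detailed than the paper's: you spell out why the $\mathfrak{t}$-weight spaces coincide with the $\mathfrak{h}$-root spaces (via the nilpotency of $\mathrm{ad}(n)$ and $[\mathfrak{t},\mathfrak{n}]=0$), whereas the paper treats the result as an immediate consequence of Theorem~\ref{patron} and the enumeration of the $\Delta_i$.
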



\section{Analysis of $\mathfrak{g}=\mathfrak{h}\oplus (  \oplus_{\xi\in\Delta_{i}} \mathfrak{g}_{\xi})$, with $Card(\Delta_{i})<7$.}

In this section we will show that for each $ \Delta_i $, $ 1 \leq i\leq15 $, the Cartan decomposition 
$ \mathfrak{g}^{\Delta_i} =\mathfrak{h}\oplus(\underset{\xi\in\Delta_i}\oplus\mathfrak{g}_ \xi) $, associated with each system of root $\Delta_i $, generate some contradictions. These incompatibilities allow us to conclude that there is not a simple Lie $2$-algebra   of toral rank $3$ with these structures (see Proposition \ref{payarin}).  The case $\Delta_0=\mathfrak{t}^{*}\backslash\{0\}$ corresponding to $i=0$ will be studied  in the next section.

\medskip

 From now on we suppose that $(\mathfrak{g},[2])$ is a simple Lie $2$-algebra with $MT(\mathfrak{g})=3$ and $\text{dim}_{K}(\mathfrak{g})\geq 7$ (since there is no simple Lie $2$-algebra with $MT(\mathfrak{g})=3$ and $\text{dim}_{K}(\mathfrak{g})=6$). Furthermore, we will assume that $\mathfrak{h}$  is a Cartan subalgebra  of $\mathfrak{g}$ of maximal  rank  and, therefore, $\mathfrak{h}\oplus \mathfrak{g}_{\xi}$ is solvable for all $\xi\in G=\left\langle \alpha,\beta,\gamma\right\rangle,$ where $G=\left\langle \alpha,\beta,\gamma\right\rangle$ is the    elementary abelian group of order $8$.

\medskip

In order to prove Proposition \ref{payarin}, we show the next lemmas: 

\begin{lemma}\label{edfrr}
If  $\text{dim}_{K}(\mathfrak{g_{\xi}})=1$ for $\xi \in \Delta\setminus \{0\}$, then $[\mathfrak{n},\mathfrak{g_{\xi}}]=0.$
\end{lemma}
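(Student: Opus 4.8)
The plan is to exploit the fact that $\mathfrak{n}$ is $2$-nilpotent and acts on the one-dimensional space $\mathfrak{g}_\xi$, combined with the simplicity of $\mathfrak{g}$. First I would observe that for each $n\in\mathfrak{n}$, the operator $\operatorname{ad}(n)$ preserves the weight-space decomposition of Theorem \ref{patron}, since $[\mathfrak{t},\mathfrak{n}]=0$ implies $\operatorname{ad}(n)$ commutes with $\operatorname{ad}(t)$ for all $t\in\mathfrak{t}$, hence $\operatorname{ad}(n)$ maps $\mathfrak{g}_\xi$ into $\mathfrak{g}_\xi$. Because $\dim_K(\mathfrak{g}_\xi)=1$, the restriction $\operatorname{ad}(n)|_{\mathfrak{g}_\xi}$ is multiplication by a scalar $c(n)\in K$. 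Since $\mathfrak{n}$ is $2$-nilpotent, $n^{[2]^m}=0$ for some $m$, and using the Lie $2$-algebra axiom $\operatorname{ad}(n^{[2]})=\operatorname{ad}(n)^2$ we get $\operatorname{ad}(n)^{2^m}|_{\mathfrak{g}_\xi}=\operatorname{ad}(n^{[2]^m})|_{\mathfrak{g}_\xi}=0$, so $c(n)^{2^m}=0$, forcing $c(n)=0$. Hence $[n,\mathfrak{g}_\xi]=0$ for every $n\in\mathfrak{n}$, i.e. $[\mathfrak{n},\mathfrak{g}_\xi]=0$.

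Actually, re-reading the statement, this argument already closes it, so the "main obstacle" is really only to be careful that the scalar $c$ is genuinely well-defined and that the axiom $\operatorname{ad}(b^{[2]})=(\operatorname{ad} b)^2$ is being applied to an element of $\mathfrak{g}$ (here $b=n\in\mathfrak{n}\subseteq\mathfrak{g}$), which is legitimate. One should also note that $\mathfrak{g}_\xi$ is indeed $\operatorname{ad}(\mathfrak{n})$-stable: if $v\in\mathfrak{g}_\xi$ and $n\in\mathfrak{n}$, then for $t\in\mathfrak{t}$,
\[
[t,[n,v]] = [[t,n],v] + [n,[t,v]] = [0,v] + [n,\xi(t)v] = \xi(t)[n,v],
\]
so $[n,v]\in\mathfrak{g}_\xi$, as claimed. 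No appeal to simplicity or to the classification is needed; the only inputs are the Lie $2$-algebra axioms, the definition of $2$-nilpotence, and the structure $\mathfrak{h}=\mathfrak{t}\oplus\mathfrak{n}$ with $[\mathfrak{t},\mathfrak{n}]=0$ from Theorem \ref{patron}.

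If one wanted a slightly different route avoiding the explicit $2^m$-th power computation, one could instead argue via Engel-type reasoning: $\operatorname{ad}(\mathfrak{n})$ consists of commuting (since $\mathfrak{n}$ need not be abelian, this is false in general) — so that alternative does not work cleanly, and the power-of-the-$2$-map argument above is the natural one. I therefore expect no serious obstacle; the proof is short, and the only thing to watch is the clean bookkeeping of which ambient algebra each bracket lives in and the correct exponent $2^m$ rather than $m$ coming from iterating the $2$-map.
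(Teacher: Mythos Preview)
Your proof is correct and follows essentially the same approach as the paper: show that $\operatorname{ad}(n)$ preserves $\mathfrak{g}_\xi$, so on a one-dimensional space it acts by a scalar, and then use nilpotence of $\operatorname{ad}(n)$ to force that scalar to be zero. The paper's version is terser (it simply asserts $[\mathfrak{n},\mathfrak{g}_\xi]\subset\mathfrak{g}_\xi$ and that $\operatorname{ad}(n)$ is nilpotent), whereas you spell out both the weight-space stability and the nilpotence via the $2$-map axiom, but the argument is the same.
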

\begin{proof}
Set $\mathfrak{g_{\xi}}=\text{span}\{e_{\xi}\}$  and take  $n\in\mathfrak{n}$. Thus $$y=\text{ad}(n)(e_{\xi})=[n,e_{\xi}]\in[\mathfrak{n},\mathfrak{g}_{\xi}]\subset \mathfrak{g}_{\xi}.$$  Therefore $\text{ad}(n)(e_{\xi})=\lambda e_{\xi}$ for some $\lambda \in K .$ Since $\text{ad}(n)$ is a nilpotent operator, we have $\lambda=0$ and therefore $y=0,$ which proves the lemma. 
\end{proof}

\begin{lemma}\label{patrondepatrones}
If $\mathfrak{g}=\mathfrak{h}\oplus\left(\underset{\xi\in\Delta_i}\oplus \mathfrak{g}_{\xi}\right)$, then $[\mathfrak{g}_{\xi},\mathfrak{g}_{\xi}]\subseteq \text{Ker}(\xi)$ for all $\xi\in\Delta.$
\end{lemma}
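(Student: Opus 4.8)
The plan is to reduce the statement to the assertion that $\text{ad}(h)$ acts nilpotently on $\mathfrak{g}_{\xi}$ for every $h\in[\mathfrak{g}_{\xi},\mathfrak{g}_{\xi}]$, and then to derive a contradiction from the solvability of $\mathfrak{h}\oplus\mathfrak{g}_{\xi}$. First I would note that, since the ground field has characteristic $2$, one has $\xi+\xi=0$, so $[\mathfrak{g}_{\xi},\mathfrak{g}_{\xi}]\subseteq\mathfrak{g}_{\xi+\xi}=\mathfrak{g}_{0}=\mathfrak{c}_{\mathfrak{g}}(\mathfrak{t})=\mathfrak{h}$; in particular $\mathfrak{h}\oplus\mathfrak{g}_{\xi}$ is a subalgebra (using also $[\mathfrak{h},\mathfrak{g}_{\xi}]\subseteq\mathfrak{g}_{\xi}$). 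Now write $\mathfrak{h}=\mathfrak{t}\oplus\mathfrak{n}$ as in Theorem \ref{patron}, so that $[\mathfrak{t},\mathfrak{n}]=0$ and $\mathfrak{n}$ is $2$-nilpotent. Given $h\in\mathfrak{h}$, decompose $h=h_{\mathfrak{t}}+h_{\mathfrak{n}}$ with $h_{\mathfrak{t}}\in\mathfrak{t}$ and $h_{\mathfrak{n}}\in\mathfrak{n}$; then $\text{ad}(h)|_{\mathfrak{g}_{\xi}}=\xi(h_{\mathfrak{t}})\,\text{id}+\text{ad}(h_{\mathfrak{n}})|_{\mathfrak{g}_{\xi}}$, where the two summands commute and $\text{ad}(h_{\mathfrak{n}})|_{\mathfrak{g}_{\xi}}$ is nilpotent because $\text{ad}(h_{\mathfrak{n}})^{2^{m}}=\text{ad}(h_{\mathfrak{n}}^{[2]^{m}})=0$ for $m$ large. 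Hence $\text{ad}(h)|_{\mathfrak{g}_{\xi}}$ is invertible if and only if $\xi(h_{\mathfrak{t}})\neq 0$, and proving $[\mathfrak{g}_{\xi},\mathfrak{g}_{\xi}]\subseteq\text{Ker}(\xi)$ is equivalent to showing that $\text{ad}(h)|_{\mathfrak{g}_{\xi}}$ is never invertible for $h\in[\mathfrak{g}_{\xi},\mathfrak{g}_{\xi}]$.

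Suppose, for contradiction, that $\text{ad}(h)|_{\mathfrak{g}_{\xi}}$ is invertible for some $h\in[\mathfrak{g}_{\xi},\mathfrak{g}_{\xi}]$. Then $\mathfrak{g}_{\xi}=\text{ad}(h)(\mathfrak{g}_{\xi})=[h,\mathfrak{g}_{\xi}]$. Set $\mathfrak{s}:=\mathfrak{h}\oplus\mathfrak{g}_{\xi}$, which is solvable by hypothesis, and let $\mathfrak{s}=\mathfrak{s}^{(0)}\supseteq\mathfrak{s}^{(1)}\supseteq\cdots$ be its derived series. I would then show by induction on $k\geq 0$ that $\mathfrak{g}_{\xi}\subseteq\mathfrak{s}^{(k)}$: the case $k=0$ is trivial, and if $\mathfrak{g}_{\xi}\subseteq\mathfrak{s}^{(k)}$, then $h\in[\mathfrak{g}_{\xi},\mathfrak{g}_{\xi}]\subseteq[\mathfrak{s}^{(k)},\mathfrak{s}^{(k)}]=\mathfrak{s}^{(k+1)}$, whence $\mathfrak{g}_{\xi}=[h,\mathfrak{g}_{\xi}]\subseteq[\mathfrak{s}^{(k+1)},\mathfrak{s}^{(k)}]\subseteq\mathfrak{s}^{(k+1)}$. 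Since $\mathfrak{s}$ is solvable, $\mathfrak{s}^{(k)}=0$ for $k$ sufficiently large, and we conclude $\mathfrak{g}_{\xi}=0$, contradicting $\xi\in\Delta$. Therefore $\xi(h_{\mathfrak{t}})=0$ for all $h\in[\mathfrak{g}_{\xi},\mathfrak{g}_{\xi}]$, which is the desired inclusion.

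The step requiring the most care is the reduction in the first paragraph — that $\text{ad}(h)$ acts on $\mathfrak{g}_{\xi}$ as a scalar plus a nilpotent operator — which hinges on the splitting $\mathfrak{h}=\mathfrak{t}\oplus\mathfrak{n}$, on $[\mathfrak{t},\mathfrak{n}]=0$, and on $\mathfrak{n}$ consisting of $2$-nilpotent elements so that $\text{ad}(\mathfrak{n})$ acts nilpotently on every $\mathfrak{t}$-weight space; one should also record once and for all that $\mathfrak{h}\oplus\mathfrak{g}_{\xi}$ is a subalgebra, which is immediate from $[\mathfrak{g}_{\xi},\mathfrak{g}_{\xi}]\subseteq\mathfrak{h}$ together with $[\mathfrak{h},\mathfrak{g}_{\xi}]\subseteq\mathfrak{g}_{\xi}$ (the latter since $\mathfrak{t}$ acts diagonally and $\mathfrak{n}$ preserves weight spaces). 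A shorter but less robust alternative to the derived-series argument is a trace computation: on $\mathfrak{s}=\mathfrak{h}\oplus\mathfrak{g}_{\xi}$ the operator $\text{ad}(h)$ with $h=[x,y]$, $x,y\in\mathfrak{g}_{\xi}$, is a commutator of operators preserving $\mathfrak{s}$, hence $\text{tr}(\text{ad}(h)|_{\mathfrak{h}})+\text{tr}(\text{ad}(h)|_{\mathfrak{g}_{\xi}})=0$; the first summand vanishes because $\mathfrak{h}$ is nilpotent, so $\xi(h_{\mathfrak{t}})\cdot\text{dim}_{K}(\mathfrak{g}_{\xi})=0$ in $K$. This already proves the lemma when $\text{dim}_{K}(\mathfrak{g}_{\xi})$ is odd, but it is inconclusive in characteristic $2$ when $\text{dim}_{K}(\mathfrak{g}_{\xi})$ is even; for that reason I would adopt the derived-series argument above, which is insensitive to $\text{dim}_{K}(\mathfrak{g}_{\xi})$.
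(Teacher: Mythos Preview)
Your proof is correct and follows essentially the same route as the paper: both arguments set $S_\xi=\mathfrak{h}\oplus\mathfrak{g}_\xi$, assume some $h\in[\mathfrak{g}_\xi,\mathfrak{g}_\xi]$ has $\xi(h)\neq 0$, and use the derived series of the solvable algebra $S_\xi$ to force $\mathfrak{g}_\xi\subseteq S_\xi^{(m)}$ for all $m$, yielding a contradiction. Your version is in fact slightly more careful than the paper's at one point: the paper writes $v=\tfrac{1}{\xi(h)}[v,h]$, which tacitly treats $\mathrm{ad}(h)$ as the scalar $\xi(h)$ on $\mathfrak{g}_\xi$, whereas you correctly observe that $\mathrm{ad}(h)|_{\mathfrak{g}_\xi}=\xi(h_{\mathfrak t})\,\mathrm{id}+\mathrm{ad}(h_{\mathfrak n})|_{\mathfrak{g}_\xi}$ is scalar-plus-nilpotent, hence invertible, which is all that is needed to get $\mathfrak{g}_\xi=[h,\mathfrak{g}_\xi]$ and run the induction. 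Your aside on the trace argument and its failure for even-dimensional $\mathfrak{g}_\xi$ in characteristic $2$ is a nice remark but, as you note, not needed.
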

\begin{proof} Let $\xi\in \Delta$ be fixed and set $S_{\xi}:=\mathfrak{h}\oplus\mathfrak{g}_{\xi}$ which is a soluble subalgebra of $\mathfrak{g}$. Suppose that there is  $h\in[\mathfrak{g}_\xi,\mathfrak{g}_\xi]$ such that  $\xi(h)\neq0$. As $\mathfrak{g}_\xi\subseteq S_{\xi}$, we have $[\mathfrak{g}_\xi,\mathfrak{g}_\xi]\subseteq[{S}_{\xi},{S}_\xi]={{S}_\xi}^{(1)}$, hence $h\in {S_{\xi}}^{(1)}$.  Thus  $[h,S_\xi]\subseteq[{S_{\xi}}^{(1)},S_{\xi}]\subseteq{S_\xi}^{(1)}$, since ${S_{\xi}}^{(1)}$ is an ideal of $S_{\xi}$. Now, take $v\in \mathfrak{g}_{\xi}$. Given that  $\xi(h)\neq 0$, we have $v=\frac{1}{\xi(h)}[v,h]\in[h,S_{\xi}]\subseteq{S_{\xi}}^{(1)}.$
Therefore $\mathfrak{g}_\xi\subseteq {S_\xi}^{(1)}$ and so \[h\in[\mathfrak{g}_\xi,\mathfrak{g}_\xi]\subseteq [{S_\xi}^{(1)},{S_\xi}^{(1)}]={S_\xi}^{(2)}.\]

Next,  suppose that $h\in {S_{\xi}}^{(m)}$ for some $m\in\mathbb{N}$. As ${S_{\xi}}^{(m)}$ is an ideal of $S_{\xi}$, we have  $[h,S_{\xi}]\subseteq{S_{\xi}}^{(m)}$ and  hence  $\mathfrak{g}_{\xi}\subseteq {S_\xi}^{(m)}$ (since $\xi(t)\neq0$). Thus  \[h\in[\mathfrak{g}_\xi,\mathfrak{g}_\xi]\subseteq [{S_\xi}^{(m)},{S_\xi}^{(m)}]={S_\xi}^{(m+1)}\] 
and so $h\in{S_\xi}^{(m+1)}$. Then by induction, $h\in{S_\xi}^{(m)} $ for all $m$, and since $S_\xi$ is solvable, we have $h=0$. This fact implies that  \,$\xi(h)=0$, which is a contradiction. Therefore $\xi([\mathfrak{g}_\xi,\mathfrak{g}_\xi])=0$. 
\end{proof}

\begin{proposition}\label{payarin}
There are no simple Lie $2$-algebras of finite dimensional of toral rank $3$, with Cartan decomposition   $\mathfrak{g}^{\Delta_{i}}=\mathfrak{h}\oplus(\underset{\xi\in\Delta_{i}}\oplus \mathfrak{g}_\xi)$, for $1\leq i\leq15$.   In particular, 
there are no simple Lie $2$-algebras $\mathfrak{g}$ with $7\leq \text{dim}_{K}(\mathfrak{g})\leq 9$ and  $MT(\mathfrak{g})=3$.
\end{proposition}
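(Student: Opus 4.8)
The plan is to derive a contradiction for each of the fifteen root systems $\Delta_i$ by exploiting the interplay between the simplicity of $\mathfrak{g}$, the solvability of each $\mathfrak{h}\oplus\mathfrak{g}_\xi$, and the bracket relations $[\mathfrak{g}_\xi,\mathfrak{g}_\eta]\subseteq\mathfrak{g}_{\xi+\eta}$. First I would record the structural consequences already in hand: by Lemma \ref{patrondepatrones} we have $[\mathfrak{g}_\xi,\mathfrak{g}_\xi]\subseteq\operatorname{Ker}(\xi)$ for every $\xi\in\Delta$, and by Lemma \ref{edfrr} any one-dimensional root space is annihilated by $\mathfrak{n}$. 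I would then observe that simplicity forces $\mathfrak{g}=[\mathfrak{g},\mathfrak{g}]$, so $\mathfrak{h}$ must be generated by brackets $[\mathfrak{g}_\xi,\mathfrak{g}_\eta]$ with $\xi+\eta=0$, i.e. with $\eta=\xi$ (since $2\xi=0$ in characteristic $2$); hence $\mathfrak{h}=\sum_{\xi\in\Delta}[\mathfrak{g}_\xi,\mathfrak{g}_\xi]$, and in particular $\mathfrak{t}\subseteq\sum_{\xi\in\Delta}[\mathfrak{g}_\xi,\mathfrak{g}_\xi]$.

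Next I would combine this with Lemma \ref{patrondepatrones}: since $[\mathfrak{g}_\xi,\mathfrak{g}_\xi]\subseteq\operatorname{Ker}(\xi)$, the sum $\sum_{\xi\in\Delta}\operatorname{Ker}(\xi)$ must be all of $\mathfrak{t}$, which is automatic as soon as $\Delta$ spans $\mathfrak{t}^*$ — so this alone is not enough and one needs a dimension count. The key quantitative step is to bound $\dim_K\mathfrak{g}$ from below using $\Delta_i$ and from above by $9$, and to bound the contribution of $\mathfrak{h}$: one has $\dim_K\mathfrak{h}\ge 3$, and $\dim_K\mathfrak{g}=\dim_K\mathfrak{h}+\sum_{\xi\in\Delta}\dim_K\mathfrak{g}_\xi\ge 3+\operatorname{Card}(\Delta)$. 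For $\operatorname{Card}(\Delta)\le 6$ this is consistent with $\dim_K\mathfrak{g}\le 9$ only when most root spaces and $\mathfrak{n}$ are small, so I would argue that all the $\mathfrak{g}_\xi$ are one-dimensional and $\mathfrak{n}=0$ (forcing $\mathfrak{h}=\mathfrak{t}$), except in a few low-cardinality cases that must be treated by hand. With $\mathfrak{n}=0$, Lemma \ref{edfrr} is vacuous but one instead uses that $[\mathfrak{g}_\xi,\mathfrak{g}_\xi]$ is at most one-dimensional and lies in the two-dimensional space $\operatorname{Ker}(\xi)\cap\mathfrak{t}$; the requirement that these one-dimensional subspaces, as $\xi$ ranges over $\Delta$, span the three-dimensional $\mathfrak{t}$ gives the combinatorial constraint to check against each $\Delta_i$.

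The main obstacle, and the bulk of the work, is that the spanning condition $\mathfrak{t}=\sum_{\xi\in\Delta}[\mathfrak{g}_\xi,\mathfrak{g}_\xi]$ can in fact be satisfied for several of the $\Delta_i$ of cardinality $5$ or $6$, so a contradiction does not come from dimension counting alone. For those cases I would instead push on the Jacobi identity: pick roots $\xi,\eta\in\Delta$ with $\xi+\eta\in\Delta$ and $\xi+\eta$ not expressible another way, so that $[\mathfrak{g}_\xi,\mathfrak{g}_\eta]=\mathfrak{g}_{\xi+\eta}$ (nonzero by simplicity, since otherwise the span of all $\mathfrak{g}_\mu$ with $\mu\ne\xi+\eta$ together with $\mathfrak{h}$ would be a proper ideal), and then evaluate $[\mathfrak{g}_\xi,[\mathfrak{g}_\xi,\mathfrak{g}_\eta]]\subseteq\mathfrak{g}_{\eta}$ versus $[\mathfrak{g}_\eta,[\mathfrak{g}_\xi,\mathfrak{g}_\xi]]\subseteq[\mathfrak{g}_\eta,\operatorname{Ker}(\xi)]$; choosing $\eta$ with $\eta(\operatorname{Ker}(\xi)\cap\mathfrak t)$ controlled forces $[\mathfrak{g}_\xi,\mathfrak{g}_\xi]=0$ for enough $\xi$ to collapse the span of the $[\mathfrak{g}_\xi,\mathfrak{g}_\xi]$ below $\dim\mathfrak{t}=3$, contradicting $\mathfrak{g}=[\mathfrak{g},\mathfrak{g}]$. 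Running this argument uniformly — or, failing uniformity, case by case through $\Delta_1,\dots,\Delta_{15}$ — yields the proposition; the final sentence about $7\le\dim_K\mathfrak{g}\le 9$ is then immediate, since by Theorem \ref{patron} any such $\mathfrak{g}$ has $\operatorname{Card}(\Delta)\le 6$ (as $3+7=10>9$) and hence falls under one of the $\Delta_i$ with $1\le i\le 15$.
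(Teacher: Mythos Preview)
Your outline has the right high-level shape --- use $[\mathfrak g_\xi,\mathfrak g_\xi]\subseteq\operatorname{Ker}(\xi)$ together with $\mathfrak h=\sum_{\xi\in\Delta}[\mathfrak g_\xi,\mathfrak g_\xi]$ and a Jacobi constraint to show that $\mathfrak t$ cannot be covered --- but the two places where you make the argument concrete do not work as stated.

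First, the proposition is a statement about \emph{all} finite-dimensional simple Lie $2$-algebras whose root system is one of $\Delta_1,\dots,\Delta_{15}$; the bound $\dim_K\mathfrak g\le 9$ appears only in the ``in particular'' sentence. Your reduction ``all $\mathfrak g_\xi$ are one-dimensional and $\mathfrak n=0$'' uses the bound $\dim_K\mathfrak g\le 9$ and so proves only the corollary, not the proposition. (Incidentally, once every $\mathfrak g_\xi$ is one-dimensional one has $[\mathfrak g_\xi,\mathfrak g_\xi]=0$ outright, so the corollary is immediate from $\mathfrak h=\sum_\xi[\mathfrak g_\xi,\mathfrak g_\xi]$; your ``at most one-dimensional'' discussion there is unnecessary.) The paper's proof is dimension-free: it never assumes $\dim_K\mathfrak g_\xi$ is small.

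Second, your Jacobi step has the hypothesis reversed. You choose $\eta\in\Delta$ with $\xi+\eta\in\Delta$ and try to compare $[\mathfrak g_\xi,[\mathfrak g_\xi,\mathfrak g_\eta]]$ with $[\mathfrak g_\eta,[\mathfrak g_\xi,\mathfrak g_\xi]]$; but this comparison does not force anything to vanish, and in fact $[\mathfrak g_\xi,\mathfrak g_\xi]$ is typically \emph{not} zero in these cases. The paper instead chooses $\eta\in\Delta$ with $\xi+\eta\notin\Delta$ --- and the whole point of $|\Delta_i|<7$ is that such $\eta$ always exist. Then $[\mathfrak g_\xi,\mathfrak g_\eta]=0$, so Jacobi gives $[\mathfrak g_\eta,[\mathfrak g_\xi,\mathfrak g_\xi]]=0$; writing $[e_i^\xi,e_j^\xi]=t_{ij}^\xi+n_{ij}^\xi$ and using that $\operatorname{ad}(n_{ij}^\xi)$ is nilpotent on $\mathfrak g_\eta$, one gets $\eta(t_{ij}^\xi)=0$. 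Collecting all such $\eta$ pins the $\mathfrak t$-part of $[\mathfrak g_\xi,\mathfrak g_\xi]$ down to an explicit line (or to $0$) in $\mathfrak t$, and a case-by-case check shows that the resulting lines never span $\mathfrak t$ for any $\Delta_i$, contradicting $\mathfrak t\subseteq\sum_\xi[\mathfrak g_\xi,\mathfrak g_\xi]$. That is the missing mechanism in your sketch.
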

\begin{proof}
 If $\text{dim}_{K}(\mathfrak{g}_\xi)=1$ for all $\xi\in\Delta_i$, with  $1\leq i\leq15$,  we have that $$0\neq \mathfrak{I}:=\mathfrak{n} \oplus(\underset{\xi\in\Delta_i}\oplus \mathfrak{g}_\xi)$$  is a non-trivial ideal  of $\mathfrak{g}$, since $[\mathfrak{n},\mathfrak{g}_\xi]=0$ (by Lemma \ref{edfrr}) and $[\mathfrak{g}_\xi,\mathfrak{g}_\xi]=0$. This fact  is a contradiction,  because $\mathfrak{g}$ is a simple Lie algebra. So,  we can  assume that $\text{dim}_K(\mathfrak{g}_\xi)\geq 2$ for all $\xi\in\Delta_i$.
 
 We will suppose, by contradiction, that there exist simple Lie 2-algebras of toral rank 3, with Cartan decomposition given in \eqref{jeo1} 
and we will get a contradiction. 
 
 \medskip
 
 \noindent \textbf{Case 1}:  $\Delta_1=\{\alpha,\beta,\gamma\}$.
 
 In this case  we have \[\mathfrak{g}=\mathfrak{t}\oplus\mathfrak{n}\oplus\mathfrak{g}_{\alpha} \oplus\mathfrak{g}_{\beta}\oplus \mathfrak{g}_{\gamma}.\]
 First we show that $[\mathfrak{g}_\xi,\mathfrak{g}_\xi]\subseteq \mathfrak{n}$, for all $\xi\in\Delta_1.$ Indeed, let $e_i^{\xi}$ and $e_j^{\xi}$ be elements in the basis of $\mathfrak{g}_\xi$. Thus ${x}^{\xi}:=[e_i^{\xi},e_j^{\xi}]\in[\mathfrak{g}_\xi,\mathfrak{g}_\xi]\subseteq \mathfrak{h}=\mathfrak{n}\oplus \mathfrak{t}$ and therefore there are $t_{ij}^{\xi}\in \mathfrak{t}$ and $n_{ij}^{\xi}\in \mathfrak{n}$ such that   $x^{\xi}=t_{ij}^{\xi}+n_{ij}^{\xi}=\alpha(t_{ij}^{\xi})t_1+\beta(t_{ij}^{\xi})t_2+\gamma(t_{ij}^{\xi})t_3+n_{ij}^\xi.$ By  Lemma \ref{patrondepatrones} we have $0=\xi(x^{\xi})=\xi(t_{ij}^{\xi}+n_{ij}^{\xi})$ for all $\xi\in\Delta_{1}$. Hence  $0=\xi(x^{\xi})=\xi(t_{ij}^{\xi}+n_{ij}^{\xi})=\xi(t_{ij}^\xi)$. Then $\alpha(t_{ij}^{\alpha})=\beta(t_{ij}^{\beta})=\gamma(t_{ij}^{\gamma})=0$ and therefore $x^{\xi}=n_{ij}^{\xi}\in \mathfrak{n}$ for all $\xi\in\Delta_{1}$. The simplicity of $\mathfrak{g}$, implies that  $\mathfrak{h}=\sum_{\xi\in\Delta_1}[\mathfrak{g}_\xi,\mathfrak{g}_\xi]$,  so $\mathfrak{h}\subseteq \mathfrak{n}$. Therefore $\mathfrak{h}=\mathfrak{n}$ and $\mathfrak{t}=0$, which is a contradiction.
 
 \medskip
 
 \noindent \textbf{Case 2.a}:  $\Delta_{2}=\{\alpha,\beta,\gamma,\alpha+\beta\}$. 
 
 We have  \[\mathfrak{g}=\mathfrak{t}\oplus \mathfrak{n}\oplus \mathfrak{g}_{\alpha} \oplus \mathfrak{g}_{\beta}\oplus \mathfrak{g}_{\gamma} \oplus \mathfrak{g}_{\alpha+\beta}.\]
 Let  $e_i^{\xi}$ and $e_j^{\xi}$ be elements of the base of $\mathfrak{g}_\xi$. Then ${x}^{\xi}:=[e_i^{\xi},e_j^{\xi}]\in \mathfrak{h}$ and there are $t_{ij}^{\xi}\in \mathfrak{t}$ and $n_{ij}^{\xi}\in \mathfrak{n}$ such that $x^{\xi}=t_{ij}^{\xi}+n_{ij}^{\xi}$. So $$0=\xi(x^{\xi})=\xi(t_{ij}^{\xi}+n_{ij}^{\xi})=\xi(t_{ij}^{\xi})\quad\text{
for all }\xi\in\Delta_2,$$ that is \[ \alpha(t_{ij}^{\alpha})=\beta(t_{ij}^{\beta})=\gamma(t_{ij}^{\gamma})=(\alpha+\beta)(t_{ij}^{\alpha+\beta})= 0.\]

On the other hand, by the Jacobi identity and since $\alpha+\gamma\notin\Delta_{2}$, we have that   \begin{equation*}0=[e_{k}^{\gamma},[e_{i}^{\alpha},e_{j}^{\alpha}]]= [e_{k}^{\gamma},x^{\alpha}]
=[e_{k}^{\gamma},t_{ij}^{\alpha}]+[e_{k}^{\gamma},n_{ij}^{\alpha}]=\gamma(t_{ij}^{\alpha})e_{k}^{\gamma}+\text{ad}(n_{ij}^{\alpha})(e_{k}^{\gamma}),
\end{equation*}
 for all $e_{k}^{\gamma}\in \mathfrak{g}_{\gamma}$.
So $\text{ad}(n_{ij}^{\alpha})(e_{k}^{\gamma})=\gamma(t_{ij}^{\alpha})e_{k}^{\gamma}$. Since $\text{ad}(n_{ij}^{\alpha})$ is nilpotent, there is $m\in \mathbb{N} $ such that $\text{ad}(n_{ij}^{\alpha})^{m}=0$ and then $\gamma(t_{ij}^{\alpha})=0$. Hence, \[x^{\alpha}=t_{ij}^{\alpha}+n_{ij}^{\alpha}= \alpha(t_{ij}^{\alpha})t_1+\beta(t_{ij}^{\alpha})t_2+\gamma(t_{ij}^{\alpha})t_3+n_{ij}^{\alpha}=\beta(t_{ij}^{\alpha})t_2+n_{ij}^{\alpha}.\]
Therefore $[\mathfrak{g}_\alpha,\mathfrak{g}_\alpha]\subseteq \left\langle t_2\right\rangle\oplus \mathfrak{n}$.
Analogously,    by the Jacobi identity and since $\alpha+\gamma,\beta+\gamma,\alpha+\beta+\gamma\notin \Delta_2$, we can prove that  $\alpha(t_{ij}^{\gamma})=\beta(t_{ij}^{\gamma})=\gamma(t_{ij}^{\beta})=\gamma(t_{ij}^{\alpha+\beta})=0$.
Therefore
\begin{align*}
x^{\beta}&=\alpha(t_{ij}^{\beta})t_1 + n_{ij}^{\beta} \\
x^{\gamma}&=n_{ij}^{\gamma}\\ 
x^{\alpha+\beta}&=\alpha(t_{ij}^{\alpha+\beta})t_1+\beta(t_{ij}^{\alpha+\beta})t_2 + n_{ij}^{\alpha+\beta},\quad  \alpha(t_{ij}^{\alpha+\beta})=\beta(t_{ij}^{\alpha+\beta}).
\end{align*}

 
Thus \[[\mathfrak{g}_{\beta},\mathfrak{g}_{\beta}]\subseteq\left\langle t_1\right\rangle\oplus\mathfrak{n}, \quad  [\mathfrak{g}_\gamma,\mathfrak{g}_\gamma]\subseteq \mathfrak{n}\quad\text{ and }\quad[\mathfrak{g}_{\alpha+\beta+\gamma},\mathfrak{g}_{\alpha+\beta+\gamma}]\subseteq\left\langle t_1+t_2\right\rangle\oplus \mathfrak{n}.\]
As $\mathfrak{g}$ is simple, we have $\mathfrak{h}=\sum_{\xi\in\Delta_2}[\mathfrak{g}_\xi,\mathfrak{g}_\xi]\subseteq\left\langle t_1,t_2\right\rangle\oplus \mathfrak{n}$ and then $\text{dim}_{K}(\mathfrak{t})\leq2$, which is a contradiction. 

\medskip

 \noindent \textbf{Case 2.b}:  $\Delta_{3}=\{\alpha,\beta,\gamma,\alpha+\gamma\}$.  
 
 We have \[\mathfrak{g}=\mathfrak{t}\oplus \mathfrak{n}\oplus \mathfrak{g}_{\alpha} \oplus \mathfrak{g}_{\beta}\oplus \mathfrak{g}_{\gamma} \oplus \mathfrak{g}_{\alpha+\gamma}.\] By the same argument of the above case and since $\alpha+\beta,\beta+\gamma$, and $\alpha+\beta+\gamma$ do not belong to $\Delta_3$, we find that:
\begin{align*}
 x^{\alpha}&=\gamma(t_{ij}^{\alpha})t_3 + n_{ij}^{\alpha}.\\
 x^{\beta}&=n_{ij}^{\beta}.\\
 x^{\gamma}&=\alpha(t_{ij}^{\gamma})t_1 + n_{ij}^{\gamma}.\\
 x^{\alpha+\gamma}&=\alpha(t_{ij}^{\alpha+\gamma})t_1+\gamma(t_{ij}^{\alpha+\gamma})t_3 + n_{ij}^{\alpha+\gamma},\quad \alpha(t_{ij}^{\alpha+\gamma})=\gamma(t_{ij}^{\alpha+\gamma}).
\end{align*}
Therefore
\begin{align*}
[\mathfrak{g}_\alpha,\mathfrak{g}_\alpha]&\subseteq\left\langle t_3\right\rangle\oplus \mathfrak{n}  \\
[\mathfrak{g}_\beta,\mathfrak{g}_\beta]&\subseteq \mathfrak{n}\\
[\mathfrak{g}_\gamma,\mathfrak{g}_\gamma]&\subseteq\left\langle t_1\right\rangle\oplus \mathfrak{n}\\
[\mathfrak{g}_{\alpha+\gamma},\mathfrak{g}_{\alpha+\gamma}]&\subseteq\left\langle t_1+t_3\right\rangle\oplus \mathfrak{n}
\end{align*}


Hence $\mathfrak{h}=\sum_{\xi\in\Delta_3}[\mathfrak{g}_\xi,\mathfrak{g}_\xi]\subseteq\left\langle t_1,t_3\right\rangle\oplus \mathfrak{n}$ and then  $\text{dim}_{K}(\mathfrak{t})\leq2$, which is a contradiction.

\medskip
\noindent \textbf{Case 2.c}: $\Delta_{4}=\{\alpha,\beta,\gamma,\beta+\gamma\}$.

Thus \[\mathfrak{g}=\mathfrak{t}\oplus \mathfrak{n}\oplus \mathfrak{g}_{\alpha} \oplus \mathfrak{g}_{\beta}\oplus \mathfrak{g}_{\gamma} \oplus \mathfrak{g}_{\beta+\gamma}.\] Since $\alpha+\beta, \alpha+\gamma, \alpha+\beta+\gamma \notin \Delta_4$ and using the same argument of the above cases we have 
\begin{align*}
 x^{\alpha}&=n_{ij}^{\alpha}\\
 x^{\beta}&=\beta(t_{ij}^{\beta})t_3 + n_{ij}^{\beta}\\
 x^{\gamma}&=\beta(t_{ij}^{\gamma})t_2 + n_{ij}^{\gamma}.\\
 x^{\beta+\gamma}&=\beta(t_{ij}^{\beta+\gamma})t_2+\gamma(t_{ij}^{\beta\gamma})t_3 + n_{ij}^{\beta+\gamma},\quad \beta(t_{ij}^{\beta+\gamma})=\gamma(t_{ij}^{\beta+\gamma}).
\end{align*}
Therefore 
\begin{align*}
 [\mathfrak{g}_\alpha,\mathfrak{g}_\alpha]&\subseteq \mathfrak{n}\\ 
 [\mathfrak{g}_\beta,\mathfrak{g}_\beta]&\subseteq\left\langle t_3\right\rangle\oplus \mathfrak{n}\\
 [\mathfrak{g}_\gamma,\mathfrak{g}_\gamma]&\subseteq\left\langle t _2\right\rangle\oplus \mathfrak{n},\\
[\mathfrak{g}_{\beta+\gamma},\mathfrak{g}_{\beta+\gamma}]&\subseteq\left\langle t_2+t_3\right\rangle\oplus \mathfrak{n}.
\end{align*}

Thus $\mathfrak{h}=\sum_{\xi\in\Delta_4}[\mathfrak{g}_\xi,\mathfrak{g}_\xi]\subseteq\left\langle t_2,t_3\right\rangle \oplus \mathfrak{n}$ and  hence $\text{dim}_{K}(\mathfrak{t})\leq2$. A contradiction.

\medskip

\noindent \textbf{Case 2.b}: $\Delta_{5}=\{\alpha,\beta,\gamma,\alpha+\beta+\gamma\}$. 

Thus \[\mathfrak{g}=\mathfrak{t}\oplus \mathfrak{n}\oplus \mathfrak{g}_{\alpha} \oplus \mathfrak{g}_{\beta}\oplus \mathfrak{g}_{\gamma} \oplus \mathfrak{g}_{\alpha+\beta+\gamma}.\] We have  $\alpha+\beta, \alpha+\gamma, \beta+\gamma \notin \Delta_5$. This fact implies that  
$[\mathfrak{g}_\alpha,\mathfrak{g}_\alpha]$, $[\mathfrak{g}_\beta,\mathfrak{g}_\beta]$, $[\mathfrak{g}_\gamma,\mathfrak{g}_\gamma]$ and  $[\mathfrak{g}_{\alpha+\beta+\gamma}, \mathfrak{g}_{\alpha+\beta+\gamma}]$ are subsets of $  \mathfrak{n}.$  Therefore $\mathfrak{h}\subseteq \mathfrak{n}$ and thus $\mathfrak{t}=0$. A contradiction.

\medskip

\noindent \textbf{Case 3.a}:  $\Delta_{6}=\{\alpha,\beta,\gamma,\alpha+\beta,\alpha+\gamma\}$.

We have \[\mathfrak{g}=\mathfrak{t}\oplus \mathfrak{n}\oplus \mathfrak{g}_{\alpha} \oplus\mathfrak{g}_{\beta}\oplus \mathfrak{g}_{\gamma} \oplus \mathfrak{g}_{\alpha+\beta}\oplus \mathfrak{g}_{\alpha+\gamma}.\] 
We can prove that 
\begin{align*}
 x^{\alpha}&=\beta(t_{ij}^{\alpha})t_2+ \gamma(t_{ij}^{\alpha})t_3+n_{ij}^{\alpha}. \\
 x^{\beta}&=n_{ij}^{\beta}.\\
 x^\gamma &=n_{ij}^\gamma.\\
 x^{\alpha+\beta}&=n_{ij}^{\alpha+\beta}.\\
 x^{\alpha+\gamma}&=n_{ij}^{\alpha+\gamma}.
  \end{align*}

Therefore,
$[\mathfrak{g}_\xi,\mathfrak{g}_\xi]\subseteq \mathfrak{n}$ for all $\xi\in\Delta_6\backslash\{\alpha\}$ and 
$[\mathfrak{g}_\alpha,\mathfrak{g}_\alpha]\subseteq\left\langle t_2+t_3\right\rangle\oplus \mathfrak{n}.$ So $$\mathfrak{g}=\sum_{\xi\in\Delta_6}[\mathfrak{g}_\xi,\mathfrak{g}_\xi]\subseteq\left\langle t_2,t_3\right\rangle\oplus \mathfrak{n}.$$  Hence $\text{dim}_{K}(\mathfrak{t})\leq2$. A contradiction.

\medskip

\noindent \textbf{Case 3.b}:   $\Delta_{7}=\{\alpha,\beta,\gamma,\alpha+\beta,\beta+\gamma\}$. 

We have \[\mathfrak{g}=\mathfrak{t}\oplus \mathfrak{n}\oplus \mathfrak{g}_{\alpha} \oplus \mathfrak{g}_{\beta}\oplus \mathfrak{g}_{\gamma} \oplus \mathfrak{g}_{\alpha+\beta}\oplus \mathfrak{g}_{\beta+\gamma}.\] 
We  find that: 
\begin{align*}
 [\mathfrak{g}_\alpha,\mathfrak{g}_\alpha]&\subseteq\mathfrak{n}.\\
 [\mathfrak{g}_\beta,\mathfrak{g}_\beta]&\subseteq\left\langle t_1+t_3\right\rangle \oplus \mathfrak{n}.\\
 [\mathfrak{g}_\gamma,\mathfrak{g}_\gamma]&\subseteq \mathfrak{n}.\\
 [\mathfrak{g}_{\alpha+\beta},\mathfrak{g}_{\alpha+\beta}]&\subseteq \left\langle t_1+t_2\right\rangle\oplus\mathfrak{n}.\\ 
 [\mathfrak{g}_{\beta+\gamma},\mathfrak{g}_{\beta+\gamma}]&\subseteq\left\langle t_2+t_3\right\rangle\oplus \mathfrak{n}.
\end{align*}

Set  $$\mathfrak{t}_0:=\left\langle t_1+t_3\right\rangle \oplus \left\langle t_1+t_2\right\rangle \oplus \left\langle t_2+t_3\right\rangle.$$  Thus $\mathfrak{h}\subseteq\sum_{\xi\in\Delta_{7}}[\mathfrak{g}_\xi,\mathfrak{g}_\xi]\subseteq \mathfrak{t}_{0}\oplus \mathfrak{n}$. Let's see that this fact implies that
$t_1\notin[\mathfrak{g},\mathfrak{g}].$ Indeed, if $t_1\in [\mathfrak{g},\mathfrak{g}]\subseteq\sum_{\xi\in\Delta_{7}}[\mathfrak{g}_\xi,\mathfrak{g}_\xi]\subseteq \mathfrak{t}_{0}\oplus \mathfrak{n}$, then there are $\delta_1$, $\delta_2$,  $\delta_3$ in $K$ and $n\in\mathfrak{n}$ such that $$t_1=\delta_1(t_1+t_3)+\delta_2(t_1+t_2)+\delta_3(t_2+t_3)+n,$$ that is,  $$(1+\delta_1+\delta_2)t_1+(\delta_2+\delta_3)t_2+(\delta_1+\delta_3)t_3=n.$$ Since $\mathfrak{t}\cap \mathfrak{n}=0$, we have $(1+\delta_1+\delta_2)t_1+(\delta_2+\delta_3)t_2+(\delta_1+\delta_3)t_3=0$. As $t_1$, $t_2$, $t_3$ are linearly independent,   we obtain the following system of equations:  
\[ \left\{
\begin{array}{lll}
1+\delta_1+\delta_2 & =0
\\
\delta_2+\delta_3 &=0
 \\
\delta_1+\delta_3 & =0 .
 \end{array}
\right.
\]
Solving the system, we have that $0\delta_1+0\delta_2=1$, which is absurd. Therefore $t_1\notin[\mathfrak{g},\mathfrak{g}]$. However, this fact implies  that $\mathfrak{t}\not\subseteq [\mathfrak{g},\mathfrak{g}],$ which is absurd, since $\mathfrak{t}\subseteq[\mathfrak{g},\mathfrak{g}].$

\medskip

\noindent \textbf{Case 3.c}:      $\Delta_{8}=\{\alpha,\beta,\gamma,\alpha+\beta,\alpha+\beta+\gamma\}$. 

We have  \[\mathfrak{g}=\mathfrak{t}\oplus \mathfrak{n}\oplus \mathfrak{g}_{\alpha} \oplus \mathfrak{g}_{\beta}\oplus \mathfrak{g}_{\gamma} \oplus \mathfrak{g}_{\alpha+\beta}\oplus \mathfrak{g}_{\alpha+\beta+\gamma}.\]
So,
\begin{align*}
 [\mathfrak{g}_\xi,\mathfrak{g}_\xi]&\subseteq \mathfrak{n} \quad \forall \xi\in\Delta_8\backslash\{\alpha+\beta\}.\\
 [\mathfrak{g}_{\alpha+\beta},\mathfrak{g}_{\alpha+\beta}]&\subseteq\left\langle t_1+t_2,t_3\right\rangle\oplus \mathfrak{n}.
\end{align*}
Then $\mathfrak{h}\subseteq\left\langle t_1+t_2,t_3\right\rangle\oplus \mathfrak{n}$ and  so $\text{dim}_K(\mathfrak{h})\leq \text{dim}_K(\mathfrak{n})+2$. Hence  $\text{dim}_{K}(\mathfrak{t})\leq2$ (contradiction).

\medskip

\noindent \textbf{Case 3.d}:     $\Delta_{9}=\{\alpha,\beta,\gamma,\alpha+\gamma,\beta+\gamma\}$. 

Then \[{\mathfrak{g}=\mathfrak{t}\oplus \mathfrak{n}\oplus \mathfrak{g}_{\alpha} \oplus \mathfrak{g}_{\beta}\oplus \mathfrak{g}_{\gamma} \oplus \mathfrak{g}_{\alpha+\gamma}\oplus \mathfrak{g}_{\beta+\gamma}}.\] We have
\begin{align*}
[\mathfrak{g}_\alpha,\mathfrak{g}_\alpha]&\subseteq \mathfrak{n}.\\ 
 [\mathfrak{g}_\beta,\mathfrak{g}_\beta]&\subseteq \mathfrak{n}.\\
 [\mathfrak{g}_\gamma,\mathfrak{g}_\gamma]&\subseteq\left\langle t_1,t_2\right\rangle\oplus \mathfrak{n}.\\
 [\mathfrak{g}_{\alpha+\gamma},\mathfrak{g}_{\alpha+\gamma}]&\subseteq \mathfrak{n}.\\
 [\mathfrak{g}_{\beta+\gamma},\mathfrak{g}_{\beta+\gamma}]&\subseteq \mathfrak{n}.
\end{align*}
Therefore $\mathfrak{h}=\sum_{\xi\in\Delta_9}[\mathfrak{g}_\xi,\mathfrak{g}_\xi]\subseteq\left\langle t_1,t_2\right\rangle\oplus \mathfrak{n}$ and then $\text{dim}_k(\mathfrak{t})\leq2$. A contradiction.

\medskip

\noindent \textbf{Case 3.e}:     $\Delta_{10}=\{\alpha,\beta,\gamma,\alpha+\gamma,\alpha+\beta+\gamma\}$.

Then \[\mathfrak{g}=\mathfrak{t}\oplus \mathfrak{n}\oplus \mathfrak{g}_{\alpha} \oplus \mathfrak{g}_{\beta}\oplus \mathfrak{g}_{\gamma} \oplus \mathfrak{g}_{\alpha+\gamma}\oplus \mathfrak{g}_{\alpha+\beta+\gamma}.\]
We have, 
\begin{align*}
[\mathfrak{g}_\xi,\mathfrak{g}_\xi]&\subseteq \mathfrak{n} \quad\forall\xi\in\Delta_{10}\backslash\{\alpha+\gamma\} \\
[\mathfrak{g}_{\alpha+\gamma},\mathfrak{g}_{\alpha+\gamma}]&\subseteq\left\langle t_2,t_1+t_3 \right\rangle\oplus\mathfrak{n} ,
\end{align*}
and then  $\text{dim}_k(T)\leq2$, which is absurd.

\medskip

\noindent \textbf{Case 3.f}:     $\Delta_{11}=\{\alpha,\beta,\gamma,\beta+\gamma,\alpha+\beta+\gamma\}$. 

Then \[\mathfrak{g}=\mathfrak{t}\oplus \mathfrak{n}\oplus \mathfrak{g}_{\alpha} \oplus \mathfrak{g}_{\beta}\oplus \mathfrak{g}_{\gamma} \oplus \mathfrak{g}_{\beta+\gamma}\oplus \mathfrak{g}_{\alpha+\beta+\gamma}.\] Therefore
\begin{align*}
 [\mathfrak{g}_\xi,\mathfrak{g}_\xi]&\subseteq \mathfrak{n}\quad 
 \forall\xi\in\Delta_{11}\backslash\{\beta+\gamma\}.\\
 [\mathfrak{g}_{\beta+\gamma},\mathfrak{g}_{\beta+\gamma}]&\subseteq\left\langle t_2,t_1+t_3\right\rangle\oplus \mathfrak{n}.
\end{align*}

Hence  $\mathfrak{h}\subseteq\left\langle t_2,t_1+t_3\right\rangle\oplus \mathfrak{n}$ and  so $\text{dim}(\mathfrak{t})\leq2$. A contradiction.

\medskip

\noindent \textbf{Case 4.a}:    $\Delta_{12}=\{\alpha,\beta,\gamma,\alpha+\beta,\alpha+\gamma,\beta+\gamma\}$. 

Then \[\mathfrak{g}=\mathfrak{t}\oplus \mathfrak{n}\oplus \mathfrak{g}_{\alpha} \oplus \mathfrak{g}_{\beta}\oplus \mathfrak{g}_{\gamma} \oplus \mathfrak{g}_{\alpha+\beta}\oplus \mathfrak{g}_{\alpha+\gamma}\oplus \mathfrak{g}_{\beta+\gamma}.\]
By  similar arguments  presented in the above cases, we have that
\begin{align*}
[\mathfrak{g}_\alpha,\mathfrak{g}_\alpha],\quad[\mathfrak{g}_{\beta+\gamma},\mathfrak{g}_{\beta+\gamma}]&\subseteq\left\langle t_2+t_3\right\rangle\oplus \mathfrak{n}.\\
[\mathfrak{g}_\beta,\mathfrak{g}_\beta],\quad[\mathfrak{g}_{\alpha+\gamma},\mathfrak{g}_{\alpha+\gamma}]&\subseteq\left\langle t_1+t_3\right\rangle \oplus \mathfrak{n}.\\
[\mathfrak{g}_\gamma,\mathfrak{g}_\gamma],\quad[\mathfrak{g}_{\alpha+\beta},\mathfrak{g}_{\alpha+\beta}]&\subseteq\left\langle t_1+t_2\right\rangle\oplus \mathfrak{n}.
\end{align*}
As in  Case 3.b, this fact implies that $t_1\notin[\mathfrak{g},\mathfrak{g}]$, because  if   \[t_1\in[\mathfrak{g},\mathfrak{g}]\subseteq\sum_{\xi\in\Delta_{12}}[\mathfrak{g}_\xi,\mathfrak{g}_\xi]\subseteq \mathfrak{t}_{0}\oplus \mathfrak{n},\quad \text{where}\quad \mathfrak{t}_{0}:=\left\langle t_2+t_3 \right\rangle\oplus \left\langle t_1+t_2\right\rangle\oplus \left\langle t_1+t_3\right\rangle,\]  then, for some $\delta_i\in K$ and $n\in \mathfrak{n}$,  \[t_1=\delta_1(t_2+t_3)+\delta_2(t_1+t_2)+\delta_3(t_1+t_3)+ n.\]  Therefore $(1+\delta_2+\delta_3)t_1+(\delta_1+\delta_2)t_2+(\delta_1+\delta_3)t_3=n$. Since $\mathfrak{t}\cap \mathfrak{n}=0$, we have $(1+\delta_2+\delta_3)t_1+(\delta_1+\delta_2)t_2+(\delta_1+\delta_3)t_3=n=0$. 
Given that $t_1,t_2,t_3$ are linearly independent,  we obtain  the following system of equations
\[  \left\{
\begin{array}{lll}
1+\delta_2+\delta_3 & =0
\\
\delta_1+\delta_2 &=0
 \\
\delta_1+\delta_3 & =0 
 \end{array}
\right.
\]
which has no solution in $K$, and this is a contradiction. Therefore $t_1\notin[\mathfrak{g},\mathfrak{g}]$, that is, $\mathfrak{t}\not\subseteq [\mathfrak{g},\mathfrak{g}],$ which is absurd, since $\mathfrak{t}\subseteq[\mathfrak{g},\mathfrak{g}].$ 

\medskip

\noindent \textbf{Case 4.b}:      $\Delta_{13}=\{\alpha,\beta,\gamma,\alpha+\beta,\alpha+\gamma,\alpha+\beta+\gamma\}$. 

Then \[\mathfrak{g}=\mathfrak{t}\oplus\mathfrak{n} \oplus \mathfrak{g}_{\alpha} \oplus \mathfrak{g}_{\beta}\oplus \mathfrak{g}_{\gamma} \oplus \mathfrak{g}_{\alpha+\beta}\oplus \mathfrak{g}_{\alpha+\gamma}\oplus \mathfrak{g}_{\alpha+\beta+\gamma}.\] We have
\begin{align*}
  [\mathfrak{g}_\alpha,\mathfrak{g}_\alpha],[\mathfrak{g}_{\alpha+\beta+\gamma},\mathfrak{g}_{\alpha+\beta+\gamma}]&\subseteq\left\langle t_2+t_3\right\rangle\oplus \mathfrak{n}.\\
  [\mathfrak{g}_\beta,\mathfrak{g}_\beta],[\mathfrak{g}_{\gamma},\mathfrak{g}_{\gamma}]&\subseteq\left\langle t_1\right\rangle\oplus \mathfrak{n}\\
  [\mathfrak{g}_{\alpha+\beta},\mathfrak{g}_{\alpha+\beta}],[\mathfrak{g}_{\alpha+\gamma},\mathfrak{g}_{\alpha+\gamma}]&\subseteq\left\langle t_1+t_2+t_3\right\rangle\oplus \mathfrak{n}.
\end{align*}
By  the same argument given in  Case 3.b, we have that $t_{2}\notin[\mathfrak{g},\mathfrak{g}]$, which is a contradiction,  since $\mathfrak{t}\subseteq[\mathfrak{g},\mathfrak{g}].$

\medskip

\noindent \textbf{Case 4.c}:     $\Delta_{14}=\{\alpha,\beta,\gamma,\alpha+\beta,\beta+\gamma,\alpha+\beta+\gamma\}$.

We have \[\mathfrak{g}=\mathfrak{t}\oplus \mathfrak{n}\oplus \mathfrak{g}_{\alpha} \oplus \mathfrak{g}_{\beta}\oplus \mathfrak{g}_{\gamma} \oplus \mathfrak{g}_{\alpha+\beta}\oplus \mathfrak{g}_{\beta+\gamma}\oplus \mathfrak{g}_{\alpha+\beta+\gamma}.\]
Therefore,
\begin{align*}
[\mathfrak{g}_\alpha,\mathfrak{g}_\alpha]&\subseteq\left\langle t_2\right\rangle\oplus \mathfrak{n} \\ 
[\mathfrak{g}_\beta,\mathfrak{g}_\beta]&\subseteq\left\langle t_1+t_3\right\rangle\oplus \mathfrak{n}\\
[\mathfrak{g}_\gamma,\mathfrak{g}_\gamma]&\subseteq\left\langle t_2\right\rangle\oplus \mathfrak{n}\\
[\mathfrak{g}_{\alpha+\beta},\mathfrak{g}_{\alpha+\beta}]&\subseteq\left\langle t_1+t_2+t_3\right\rangle\oplus \mathfrak{n}\\
[\mathfrak{g}_{\beta+\gamma},\mathfrak{g}_{\beta+\gamma}]&\subseteq\left\langle t_1+t_2+t_3 \right\rangle\oplus\mathfrak{n}\\
[\mathfrak{g}_{\alpha+\beta+\gamma},\mathfrak{g}_{\alpha+\beta+\gamma}]&\subseteq\left\langle t_1+t_3\right\rangle\oplus \mathfrak{n}
\end{align*}

We prove that $t_1\notin[\mathfrak{g},\mathfrak{g}]$. Otherwise, 
 \[t_1\in[\mathfrak{g},\mathfrak{g}]\subseteq\sum_{\xi\in\Delta_{14}}[\mathfrak{g}_\xi,\mathfrak{g}_\xi]\subseteq \mathfrak{t}_{0}\oplus \mathfrak{n},\quad\text{where}\quad \mathfrak{t}_{0}:=\left\langle t_2\right\rangle\oplus \left\langle t_1+t_3\right\rangle\oplus\left\langle t_1+t_2+t_3\right\rangle.\]
 Then $t_1=\delta_1(t_2)+\delta_2(t_1+t_3)+\delta_3(t_1+t_2+t_3)+ n$ for some $\delta_i\in K$ and $n\in \mathfrak{n}$. Thus $(1+\delta_2+\delta_3)t_1+(\delta_1+\delta_2)t_2+(\delta_1+\delta_3)t_3=n$. As $\mathfrak{t}\cap\mathfrak{n}=0$, then $(1+\delta_2+\delta_3)t_1+(\delta_1+\delta_2)t_2+(\delta_1+\delta_3)t_3=n=0.$ 
Since $t_1$, $t_2$, $t_3$ are linearly independent,  we have the following system 
\[  \left\{
\begin{array}{lll}
1+\delta_2+\delta_3 & =0
\\
\delta_1+\delta_2 &=0
 \\
\delta_1+\delta_3 & =0 
 \end{array}
\right.
\]
which has no solution,   a contradiction. Hence, $t_1\notin[\mathfrak{g},\mathfrak{g}]$. This fact is absurd because   $\mathfrak{t}\subseteq[\mathfrak{g},\mathfrak{g}].$

\medskip

\noindent \textbf{Case 4.d}:   $\Delta_{15}=\{\alpha,\beta,\gamma,\alpha+\gamma,\beta+\gamma,\alpha+\beta+\gamma\}$, then \[\mathfrak{g} =\mathfrak{t}\oplus\mathfrak{n} \oplus \mathfrak{g}_{\alpha} \oplus\mathfrak{g}_{\beta}\oplus \mathfrak{g}_{\gamma} \oplus \mathfrak{g}_{\alpha+\gamma}\oplus \mathfrak{g}_{\beta+\gamma}\oplus \mathfrak{g}_{\alpha+\beta+\gamma}\] We have
\begin{align*}
 [\mathfrak{g}_\alpha,\mathfrak{g}_\alpha]&\subseteq\left\langle t_3\right\rangle\oplus \mathfrak{n}.\\
 [\mathfrak{g}_\beta,\mathfrak{g}_\beta]&\subseteq\left\langle t_3\right\rangle \oplus \mathfrak{n}.\\
 [\mathfrak{g}_\gamma,\mathfrak{g}_\gamma]&\subseteq\left\langle t_1+t_2\right\rangle \oplus \mathfrak{n}.\\
 [\mathfrak{g}_{\alpha+\gamma},\mathfrak{g}_{\alpha+\gamma}]&\subseteq\left\langle t_1+t_2+t_3\right\rangle\oplus \mathfrak{n}.\\
 [L_{\beta+\gamma},L_{\beta+\gamma}]&\subseteq\left\langle t_1+t_2+t_3\right\rangle\oplus\mathfrak{n}.\\
 [\mathfrak{g}_{\alpha+\beta+\gamma},\mathfrak{g}_{\alpha+\beta+\gamma}]&\subseteq\left\langle t_1+t_2\right\rangle\oplus \mathfrak{n}.
\end{align*}
Set $\mathfrak{t}_{0}=\left\langle t_3\right\rangle\oplus \left\langle t_1+t_2\right\rangle\oplus\left\langle t_1+t_2+t_3\right\rangle$. Thus $[\mathfrak{g},\mathfrak{g}]\subseteq\sum_{\xi\in\Delta_{15}}[\mathfrak{g}_\xi,\mathfrak{g}_\xi]\subseteq \mathfrak{t}_{0}\oplus\mathfrak{n}$. If $t_1\in [\mathfrak{g},\mathfrak{g}]$ then there are  $\delta_1,\delta_2, \delta_3$ in  $K$ such that $t_1=\delta_1t_3+\delta_2(t_1+t_2)+\delta_3(t_1+t_2+t_3)+n$. Therefore we have the following system of equations
\[  \left\{
\begin{array}{lll}
1+\delta_2+\delta_3 & =0
\\
\delta_2+\delta_3 &=0
 \\
\delta_1+\delta_3 & =0 .
 \end{array}
\right.
\]
Solving the system, we have $0\delta_2+0\delta_3=1$, which is absurd. Therefore $t_1\notin[\mathfrak{g},\mathfrak{g}]$, which  is a contradiction because $\mathfrak{t}\subseteq[\mathfrak{g},\mathfrak{g}]$. \end{proof}

\section{Analysis of $\mathfrak{g}=\mathfrak{h}\oplus( \oplus _{\xi\in\Delta}\mathfrak{g}_\xi)$, with $\Delta=\mathfrak{t}^{*}\backslash\{0\}$.}

In this section, we will study the Cartan decomposition of $\mathfrak{g}^{\Delta}$, when $\text{Card}(\Delta)= 7$, that is, when $\Delta = \Delta_{0}=\mathbf{t}^{*}\backslash\{0\}. $ We will prove that there not simple Lie 2-algebra of $10\leq \text{dim}_{K}(\mathfrak{g})\leq 16$ with  $MT(\mathfrak{g})=3$.

From now on, we will suppose that $(\mathfrak{g},[2])$ is a simple Lie $2$-algebra of toral rank 3, with dimension greater that or equal to 10, whose Cartan decomposition with respect to $\mathfrak{t}$ is: \[\mathfrak{g}=\mathfrak{t}\oplus\mathfrak{n}\oplus \mathfrak{g}_{\alpha}\oplus\mathfrak{g}_{\beta}\oplus \mathfrak{g}_{\gamma}\oplus \mathfrak{g}_{\alpha+\beta}\oplus \mathfrak{g}_{\alpha+\gamma}\oplus \mathfrak{g}_{\beta+\gamma}\oplus \mathfrak{g}_{\alpha+\beta+\gamma}.\]

Remember that we are considering that \[\text{dim}_{K}(\mathfrak{g}_{\alpha})\geq \text{dim}_{K}(\mathfrak{g}_{\beta})\geq \text{dim}_{K}(\mathfrak{g}_{\gamma})\geq \text{dim}_{K}(\mathfrak{g}_{\xi})> 0\quad \text{for}\quad \xi\notin \Delta\backslash\{\alpha,\beta,\alpha+\beta\}\]
(see Remark \ref{obs33}).

\medskip

 Next theorem will help us to study (classify) those algebras in whose decomposition of Cartan the root  spaces   have different dimensions, which will be fundamental to show our main result (Theorem \ref{theorem53}). 

\begin{theorem}\label{proposicion51}
Take $\alpha,\beta\in\Delta$. If there exists $e_\alpha\in \mathfrak{g}_\alpha$ such that \[e_\alpha^{[2]}:=t_\alpha+n_\alpha, \quad t_\alpha\in \mathfrak{t}\backslash\{0\},\quad n_\alpha\in \mathfrak{n} \quad \text{and}\quad\beta(t_\alpha)\neq 0,\] then $\mathfrak{g}_\beta$ is isomorphic to $\mathfrak{g}_{\alpha+\beta}$.
\end{theorem}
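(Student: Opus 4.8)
The plan is to exhibit an explicit linear isomorphism $\mathfrak{g}_\beta\to\mathfrak{g}_{\alpha+\beta}$, namely the restriction of $\text{ad}(e_\alpha)$. First I would set up the bookkeeping. Since $e_\alpha\in\mathfrak{g}_\alpha$, the operator $\text{ad}(e_\alpha)$ maps $\mathfrak{g}_\mu$ into $\mathfrak{g}_{\mu+\alpha}$ for every weight $\mu$, and as $\text{char}\,K=2$ we have $2\alpha=0$, so $\text{ad}(e_\alpha)^2$ preserves each weight space. By the identity $\text{ad}(x^{[2]})=\text{ad}(x)^2$ and additivity of $\text{ad}$,
\[\text{ad}(e_\alpha)^2=\text{ad}\big(e_\alpha^{[2]}\big)=\text{ad}(t_\alpha)+\text{ad}(n_\alpha).\]
Here $[t_\alpha,n_\alpha]=0$, since both lie in $\mathfrak{h}$ and $[\mathfrak{t},\mathfrak{n}]=0$, so the two summands commute; $\text{ad}(t_\alpha)$ acts on $\mathfrak{g}_\mu$ as the scalar $\mu(t_\alpha)$; and $\text{ad}(n_\alpha)$ is nilpotent (as $n_\alpha\in\mathfrak{n}$ is $2$-nilpotent, iterating $\text{ad}(x^{[2]})=\text{ad}(x)^2$ yields $\text{ad}(n_\alpha)^{2^{m}}=\text{ad}\big(n_\alpha^{[2]^{m}}\big)=0$) and preserves each $\mathfrak{g}_\mu$.

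Next I would determine the relevant scalars. Evaluating $\text{ad}(e_\alpha)^2$ on $e_\alpha$ itself gives $\text{ad}(e_\alpha)([e_\alpha,e_\alpha])=0$, while it also equals $[t_\alpha+n_\alpha,e_\alpha]=\alpha(t_\alpha)e_\alpha+[n_\alpha,e_\alpha]$. Hence $e_\alpha$ is an eigenvector of the nilpotent operator $\text{ad}(n_\alpha)|_{\mathfrak{g}_\alpha}$ for the eigenvalue $\alpha(t_\alpha)$; since $e_\alpha\neq 0$ (otherwise $t_\alpha+n_\alpha=0$ would force $t_\alpha\in\mathfrak{t}\cap\mathfrak{n}=0$), this gives $\alpha(t_\alpha)=0$, whence $(\alpha+\beta)(t_\alpha)=\alpha(t_\alpha)+\beta(t_\alpha)=\beta(t_\alpha)\neq 0$. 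Consequently, on each of $\mathfrak{g}_\beta$ and $\mathfrak{g}_{\alpha+\beta}$ the operator $\text{ad}(e_\alpha)^2$ has the form $(\text{nonzero scalar})\cdot\mathrm{Id}+(\text{nilpotent})$, hence is invertible.

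Finally, set $\phi:=\text{ad}(e_\alpha)|_{\mathfrak{g}_\beta}\colon\mathfrak{g}_\beta\to\mathfrak{g}_{\alpha+\beta}$ and $\psi:=\text{ad}(e_\alpha)|_{\mathfrak{g}_{\alpha+\beta}}\colon\mathfrak{g}_{\alpha+\beta}\to\mathfrak{g}_\beta$. Then $\psi\circ\phi=\text{ad}(e_\alpha)^2|_{\mathfrak{g}_\beta}$ is invertible, so $\phi$ is injective, and $\phi\circ\psi=\text{ad}(e_\alpha)^2|_{\mathfrak{g}_{\alpha+\beta}}$ is invertible, so $\phi$ is surjective; thus $\phi$ is the desired linear isomorphism (in particular $\text{dim}_{K}(\mathfrak{g}_\beta)=\text{dim}_{K}(\mathfrak{g}_{\alpha+\beta})$). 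I expect the one genuinely delicate point to be establishing $\alpha(t_\alpha)=0$: the hypothesis $\beta(t_\alpha)\neq 0$ alone yields only an injection $\mathfrak{g}_\beta\hookrightarrow\mathfrak{g}_{\alpha+\beta}$, and surjectivity really needs that $(\alpha+\beta)(t_\alpha)\neq 0$ too.
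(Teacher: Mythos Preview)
Your proof is correct and follows essentially the same route as the paper: both arguments use $\text{ad}(e_\alpha)^2=\text{ad}(t_\alpha)+\text{ad}(n_\alpha)$ together with nilpotency of $\text{ad}(n_\alpha)$ to show that $\text{ad}(e_\alpha)$ is injective in both directions between $\mathfrak{g}_\beta$ and $\mathfrak{g}_{\alpha+\beta}$, and both rely on $\alpha(t_\alpha)=0$ to handle the $\mathfrak{g}_{\alpha+\beta}\to\mathfrak{g}_\beta$ direction. Your presentation is in fact a bit cleaner: you package the argument as ``nonzero scalar plus nilpotent is invertible'' rather than an element-chasing injectivity computation, and you supply an explicit justification for $\alpha(t_\alpha)=0$, which the paper invokes (writing $(\alpha+\beta)(t_\alpha)=0+\beta(t_\alpha)$) without proving at that spot.
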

\begin{proof}
Take $e_\alpha\in \mathfrak{g}_\alpha$ and let $\text{ad}(e_\alpha):\mathfrak{g}_\beta\rightarrow \mathfrak{g}_{\alpha+\beta}$ be the adjoint mapping. We prove that $\text{ad}(e_\alpha)$ is injective. Take $e_\beta$, $f_\beta$ in $\mathfrak{g}_{\beta}$ such that $\text{ad}(e_\alpha)(e_\beta)=\text{ad}(e_\alpha)(f_\beta)$. Then $[e_\alpha,e_\beta]=[e_\alpha,f_\beta].$ Therefore \[[e_\alpha^{[2]},e_\beta]=[e_\alpha,[e_\alpha,e_\beta]]=[e_\alpha,[e_\alpha,f_\beta]]=[{e_\alpha}^{[2]},f_\beta].\] 
By hypothesis $[t_\alpha+n_\alpha,e_\beta]=[t_\alpha+n_\alpha,f_\beta]$, that is, $$ \beta(t_\alpha)e_\beta+\text{ad}(n_\alpha)(e_\beta)=\beta(t_\alpha)f_\beta+\text{ad}(n_\alpha)(f_\beta)$$  and so $\beta(t_\alpha)(e_\beta+f_\beta)=\text{ad}(n_\alpha)(e_\beta+f_\beta).$ Therefore, $\text{ad}(t_\alpha)(e_\beta+f_\beta)=\text{ad}(n_\alpha)(e_\beta+f_\beta)$. As $\text{ad}(n_\alpha)$ is nilpotent, there exists $m\in\mathbb{N}$ such that ${\text{ad}(n_\alpha)}^{m}=0.$ 
So ${\text{ad}(t_\alpha)}^{m}(e_\beta+f_\beta)=0$ and therefore $(\text{ad}(t_\alpha)(e_\beta+f_\beta))^{m}=0$. Then $\text{ad}(t_\alpha)(e_\beta+f_\beta)=0$ and so  $\beta(t_\alpha)(e_\beta+f_\beta)=0$.  As $\beta(t_\alpha)\neq 0$ we have $e_\beta=f_\beta$ and so $\text{ad}(e_\alpha)$  is injective.

Analogously we can prove that  $\text{ad}(e_\alpha):\mathfrak{g}_{\alpha+\beta}\rightarrow \mathfrak{g}_{\beta}$ is an injective linear transformation, since $(\alpha+\beta)(t_\alpha)=\alpha(t_\alpha)+\beta(t_\alpha)=0+\beta(t_\alpha)=\beta(t_\alpha)\neq0$. 

The rank-nullity formula implies that
\[\text{dim}_{K}( \mathfrak{g}_{\beta})=\text{dim}_{K}\text{Im}(\text{ad}(e_\alpha))\leq \text{dim}_{K}(\mathfrak{g}_{\alpha+\beta})\]  and 
\[\text{dim}_{K}(\mathfrak{g}_{\alpha+\beta})=\text{dim}_{K}\text{Im}(\text{ad}(e_\alpha))\leq \text{dim}_{K}(\mathfrak{g}_{\beta}).\] Therefore 
$\text{dim}_{K}(\mathfrak{g}_{\beta})=\text{dim}_{K}(\mathfrak{g}_{\alpha+\beta})$.
\end{proof}

\begin{notation} For $\mathfrak{j}\in \{\mathfrak{g},\mathfrak{t},\mathfrak{n},\mathfrak{\mathfrak{g}_\alpha},\mathfrak{\mathfrak{g}_\beta},\mathfrak{g}_{\gamma},\mathfrak{g}_{\alpha+\beta},\mathfrak{g}_{\alpha+\gamma},\mathfrak{g}_{\beta+\gamma},\mathfrak{g}_{\alpha+\beta+\gamma}\}$, we set    $$\text{dim}_{K}(\mathfrak{j})=\text{d}(\mathfrak{j}).$$  Consider
\[\mathfrak{P}=(\text{d}(\mathfrak{g}):\text{d}(\mathfrak{t}),\text{d}(\mathfrak{n}),\text{d}(\mathfrak{g}_\alpha),\text{d}(\mathfrak{g}_\beta),\text{d}(\mathfrak{g}_\gamma),\text{d}(\mathfrak{g}_{\alpha+\beta}),\text{d}(\mathfrak{g}_{\alpha+\gamma}),\text{d}(\mathfrak{g}_{\beta+\gamma}),\text{d}(\mathfrak{g}_{\alpha+\beta+\gamma})).\]
\end{notation}

\medskip


Next,  fixing the dimension of $(\mathfrak{g},[2])$, we will study its structure when $\Delta=\mathfrak{t}^{*}\backslash\{0\}.$ We have the followings possibilities: 
\begin{enumerate}[1.]
    \item If $\text{dim}_{K}(\mathfrak{g})=10$, we have $\mathfrak{P}=(10: 3,0,1,1,1,1,1,1,1)$. Thus $\mathfrak{I}=\underset{\xi\in\Delta}\oplus \mathfrak{g}_{\xi} $ is an ideal of $\mathfrak{g}$, so $\mathfrak{g} $ is not simple. A contradiction.
    \item If $ \text{dim}_K (\mathfrak{g}) = 11 $, we have the following cases:
    \begin{enumerate}[i.]
        \item $\mathfrak{P}=(11:3,0,2,1,1,1,1,1,1)$. In this case, as  $\mathfrak{t}\subseteq\sum_{\xi\in\Delta}[\mathfrak{g}_\xi,\mathfrak{g}_\xi]\subseteq[\mathfrak{g}_\alpha,\mathfrak{g}_\alpha],$ then the $\text{dim}_{K}(\mathfrak{t})\leq1$ ( A contradiction). 
        \item $\mathfrak{P}=(11:3,1,1,1,1,1,1,1,1)$. Then $\mathfrak{I}:=\mathfrak{n}\oplus(\underset{\xi\in\Delta}\oplus \mathfrak{g}_\xi)$ is an ideal of $\mathfrak{g}$. A contradiction.
    \end{enumerate}
    \item If $\text{dim}_{K}(\mathfrak{g})=12$, we have:
    \begin{enumerate}[i.]
        \item $\mathfrak{P}=(12:3,0,3,1,1,1,1,1,1)$. If $e_\alpha\in[\mathfrak{g}_\alpha,\mathfrak{g}_\alpha]$, then there exist $\delta_1,\delta_2,\delta_3\in K$ such that $e_\alpha=\delta_1t_1+\delta_2t_2+\delta_3t_3$. Thus $$ 0=[e_\alpha,e_\alpha]=\delta_1\alpha(t_1)e_\alpha+\delta_2\alpha(t_2)e_\alpha+\delta_3\alpha(t_3)e_\alpha =\delta_1e_\alpha.$$ Therefore $\delta_1=0$ and so $e_\alpha=\delta_2t_2+\delta_3t_3$, that is, $e_\alpha\in\left\langle t_2,t_3\right\rangle$. Hence  $[\mathfrak{g}_\alpha,\mathfrak{g}_\alpha]\subseteq\left\langle t_2,t_3 \right\rangle.$
        Since $\mathfrak{t}\subseteq\sum_{\xi\in\Delta}[\mathfrak{g}_\xi,\mathfrak{g}_\xi]\subseteq[\mathfrak{g}_\alpha,\mathfrak{g}_\alpha]\subseteq\left\langle t_2,t_3\right\rangle$, we have  $\text{dim}_{K}(\mathfrak{t})\leq2$, which is a  contradiction.
        \item $\mathfrak{P}=(12:3,0,2,2,1,1,1,1,1)$. Then $\mathfrak{t}\subseteq\sum_{\xi\in\Delta}[\mathfrak{g}_\xi,\mathfrak{g}_\xi]\subseteq[\mathfrak{g}_\alpha,\mathfrak{g}_\alpha]+[\mathfrak{g}_\beta,\mathfrak{g}_\beta]$ and so $\text{dim}_{K}(\mathfrak{t})\leq \text{dim}_{K}([\mathfrak{g}_\alpha,\mathfrak{g}_\alpha])+\text{dim}_{K}([\mathfrak{g}_\beta,\mathfrak{g}_\beta])\leq 1+1=2$.
        \item $\mathfrak{P}=(12:3,1,2,1,1,1,1,1,1)$.  We have
        $\mathfrak{h}\subseteq\sum_{\xi\in\Delta}[\mathfrak{g}_\xi,\mathfrak{g}_\xi]=[\mathfrak{g}_\alpha,\mathfrak{g}_\alpha]$ and since $\text{dim}_{K}(\mathfrak{g}_\alpha)=2$, it follows that $\text{dim}_{K}([\mathfrak{g}_\alpha,\mathfrak{g}_\alpha])\leq1$. So,
       $\text{dim}_{K}(\mathfrak{h})\leq1$. A contradiction.
       \item $\mathfrak{P}=(12:3,2,1,1,1,1,1,1,1)$. We have $\mathfrak{I}:=\mathfrak{n}\oplus\sum_{\xi\in\Delta}\oplus \mathfrak{g}_\xi$ is an  ideal of $\mathfrak{g}$. A contradiction.
 \end{enumerate}
    \item If $\text{dim}_{K}(\mathfrak{g})=13$, then we have the following structures for $\mathfrak{g}$:
  \begin{enumerate}[i.]
       \item $\mathfrak{P}=(13:3,0,4,1,1,1,1,1,1)$.
       \item $\mathfrak{P}=(13:3,0,3,2,1,1,1,1,1)$.
      \item $\mathfrak{P}=(13:3,0,2,2,2,1,1,1,1)$.\\
For these three cases we have:  
let $e_{\beta}\in\mathfrak{g}_{\beta}$ be an  element of the base of $\mathfrak{g}_{\beta}$. Then $e_{\beta}^{[2]}\in\mathfrak{t}$ and so there exist $\delta_1,\delta_2,\delta_3\in K$ such that $e_\beta^{[2]}=\delta_1t_1+\delta_2t_2+\delta_3t_3$. Since $\beta(e_\beta^{[2]})=0$, we have $\delta_2=0$. Thus $t_\beta=\delta_1t_1+\delta_3t_3$ for some  $\delta_1,\delta_3\in K$ and so
$\mathfrak{g}_\beta^{[2]}\subseteq\left\langle t_1,t_3\right\rangle$. If we choose $\delta_1\neq0$,  then $\alpha(t_{\beta})=\delta_1\neq0$. By Theorem \ref{proposicion51} we have $\mathfrak{g}_\alpha\simeq \mathfrak{g}_{\alpha+\beta}$. However, this is impossible because  $\text{dim}_{K}(\mathfrak{g}_\alpha)\neq \text{dim}_{K}(\mathfrak{g}_{\alpha+\beta})=1$.
\item $\mathfrak{P}=(13:3,1,1,1,1,1,1,1).$ Thus  $\mathfrak{I}:=\mathfrak{n}\oplus(\underset{\xi\in\Delta}\oplus \mathfrak{g}_\xi)$ is an ideal of $\mathfrak{g}$, which is absurd.  
\item $\mathfrak{P}=(13:3,2,2,1,1,1,1,1,1)$. We have $\mathfrak{h}\subseteq\sum_{\xi\in\Delta}[\mathfrak{g}_\xi,\mathfrak{g}_\xi]=[\mathfrak{g}_\alpha,\mathfrak{g}_\alpha]$. Thus $\text{dim}_{K}(\mathfrak{h})\leq1$. A contradiction.
\item $\mathfrak{P}=(13:3,1,3,1,1,1,1,1,1)$. We have \[\mathfrak{h}\subseteq\sum_{\xi\in\Delta}[\mathfrak{g}_\xi,\mathfrak{g}_\xi]=[\mathfrak{g}_\alpha,\mathfrak{g}_\alpha]\subseteq\left\langle t_2,t_3\right\rangle \] and so $\text{dim}_{K}(\mathfrak{h})\leq2$. A contradiction.
\item $\mathfrak{P}=(13:3,1,2,2,1,1,1,1,1)$. Thus $\mathfrak{h}\subseteq\sum_{\xi\in\Delta}[\mathfrak{g}_\xi,\mathfrak{g}_\xi]=[\mathfrak{g}_\alpha,\mathfrak{g}_\alpha]+[\mathfrak{g}_\beta,\mathfrak{g}_\beta].$ Hence $\text{dim}_{K}(\mathfrak{h})\leq2$. A contradiction. 
\end{enumerate}
\item If $\text{dim}_{K}(\mathfrak{g})=14$. We have the following possibilities for $\mathfrak{P}$:
\begin{multicols}{2}
\begin{enumerate}[i.]
\item $(14:3,0,5,1,1,1,1,1,1)$ 
\item $(14:3,0,4,2,1,1,1,1,1)$
\item $(14:3,0,3,3,1,1,1,1,1)$
\item $(14:3,0,3,2,2,1,1,1,1)$
\item $(14:3,0,2,2,2,2,1,1,1)$
\item $(14:3,1,2,2,2,1,1,1,1)$
\item $(14:3,1,3,2,1,1,1,1,1)$
\item $(14:3,1,4,1,1,1,1,1,1)$
\item $(14:3,2,2,2,1,1,1,1,1)$
\item $(14:3,2,3,1,1,1,1,1,1)$
\item $(14:3,3,2,1,1,1,1,1,1)$
\item $(14:3,4,1,1,1,1,1,1,1)$
\end{enumerate}
\end{multicols}
 For the case  xii, we have that $\mathfrak{I}:=\mathfrak{n}\oplus(\underset{\xi\in\Delta}\oplus \mathfrak{g}_\xi)$ is an ideal of $\mathfrak{g}$, which is absurd.
 Now, let $e_\gamma$ be an element of the base of $\mathfrak{g}_\gamma$. As $e_\gamma^{[2]}\in \mathfrak{g}_\gamma^{[2]}\subseteq \mathfrak{h}=\mathfrak{t}\oplus\mathfrak{n}$, then there are  $t_\gamma\in \mathfrak{t}$ and $n_\gamma\in \mathfrak{n}$ such that  $e_\gamma^{[2]}=t_\gamma+n_\gamma$. As $t_\gamma\in \mathfrak{t}$, there exist $\delta_1$, $\delta_2$, $\delta_3$ in $K$, such that $t_\gamma=\delta_1t_1+\delta_2t_2+\delta_3t_3$. On the other hand, $0=[e_\gamma,e_\gamma^{[2]}]=\gamma(t_\gamma)e_\gamma+[e_\gamma,n_\gamma]$, then $\text{ad}(n_\gamma)(e_\gamma)=\gamma(t_\gamma)e_\gamma$. As $n_\gamma$ is nilpotent then $\text{ad}(n_\gamma)$ is nilpotent and therefore $\gamma(t_\gamma)=\delta_3=0$. So, $e_\gamma^{[2]}= \delta_1t_1+\delta_2t_2 + n_\gamma$. Choosing  $\delta_3\neq0$ we have $t_\gamma\neq0$ and $\alpha(t_\gamma)=\delta_1\neq0$, then by Theorem  \ref{proposicion51}, $\mathfrak{g}_\alpha\simeq \mathfrak{g}_{\alpha+\gamma}$. This fact shows that the remaining cases are impossible because $\text{dim}_{K}(\mathfrak{g}_\alpha)\neq \text{dim}_{K}(\mathfrak{g}_{\alpha+\gamma})=1$.
\item If $\text{dim}_{K}(\mathfrak{g})=15$, we have the following cases:
\begin{multicols}{2}
\begin{enumerate}[i.]
\item $(15:3,5,1,1,1,1,1,1,1)$
\item $(15:3,4,2,1,1,1,1,1,1)$
\item $(15:3,3,2,2,1,1,1,1,1)$
\item $(15:3,2,3,2,1,1,1,1,1)$
\item $(15:3,2,2,2,2,1,1,1,1)$
\item $(15:3,1,3,2,2,1,1,1,1)$
\item $(15:3,0,6,1,1,1,1,1,1)$
\item $(15:3,0,5,2,1,1,1,1,1)$
\item $(15:3,0,4,3,1,1,1,1,1)$
\item $(15:3,0,3,3,2,1,1,1,1)$
\item $(15:3,0,3,2,2,2,1,1,1)$
\item $(15:3,0,2,2,2,2,2,1,1)$
\item $(15:3,0,2,2,2,2,2,1,1)$
\end{enumerate}
\end{multicols}
For the case i. we have $\mathfrak{I}:=\mathfrak{n}\oplus(\underset{\xi\in\Delta}\oplus \mathfrak{g}_\xi)$ is an ideal of $\mathfrak{g}$. A contradiction. As in the above cases, let $e_\beta$ be an element of the  basis of $\mathfrak{g}_\beta$. Since $e_\beta^{[2]}\in \mathfrak{g}_\beta^{[2]}\subseteq \mathfrak{h}=\mathfrak{t}\oplus\mathfrak{n}$,   there exist $t_\beta\in \mathfrak{t}$ and $n_\beta\in \mathfrak{n}$ such that  $e_\beta^{[2]}=t_\beta+n_\beta$. As $t_\beta\in\mathfrak{t}$, there are $\delta_1$, $\delta_2$, $\delta_3$ in $K$, such that $t_\beta=\delta_1t_1+\delta_2t_2+\delta_3t_3$. On the other hand, $0=[e_\beta,e_\beta^{[2]}]=\beta(t_\beta)e_\beta+[e_\beta,n_\beta]$ and thus  $\text{ad}(n_\beta)(e_\beta)=\beta(t_\beta)e_\beta$. Since $n_\beta$ is nilpotent we have $\text{ad}(n_\beta)$ is nilpotent and therefore $\beta(t_\beta)=\delta_2=0$. So, $e_\beta^{[2]}= \delta_1t_1+\delta_3t_3 + n_\beta$. Choosing  $\delta_1\neq0$, we have $t_\beta\neq0$ and $\alpha(t_\beta)=\delta_1\neq0$. Thus, by Theorem \ref{proposicion51},  we have $\mathfrak{g}_\alpha\simeq \mathfrak{}_{\alpha+\beta}$. Therefore,   the cases from ii. to  xii.  are all impossible, because   $\text{dim}_{K}(\mathfrak{g}_\alpha)\neq \text{dim}_{K}(\mathfrak{g}_{\alpha+\beta})=1$.

On the other hand, choosing $\delta_1\neq\delta_3$,   we have  \[(\alpha+\gamma)(t_\beta)=(\alpha+\gamma)(\delta_1t_1+\delta_3t_3)=\delta_1+\delta_3\neq0.\] Therefore, by Theorem \ref{proposicion51}, we have $\mathfrak{g}_{\alpha+\gamma}\simeq \mathfrak{g}_{\alpha+\beta+\gamma}$. However \[2=\text{dim}_{K}(\mathfrak{g}_{\alpha+\gamma})\neq \text{dim}_{K}(\mathfrak{g}_{\alpha+\beta+\gamma})=1,\] then the case xiii.  is not  possible.
\item If  $\text{dim}_{K}(\mathfrak{g})=16$, we have the following cases:
\begin{multicols}{2}
\begin{enumerate}[i.]
\item $(16:3,6,1,1,1,1,1,1,1)$
\item $(16:3,5,2,1,1,1,1,1,1)$
\item $(16:3,4,3,1,1,1,1,1,1)$
\item $(16:3,4,2,2,1,1,1,1,1)$
\item $(16:3,3,4,1,1,1,1,1,1)$
\item $(16:3,3,3,2,1,1,1,1,1)$
\item $(16:3,3,2,2,2,1,1,1,1)$
\item $(16:3,2,5,1,1,1,1,1,1)$
\item $(16:3,2,4,2,1,1,1,1,1)$
\item $(16:3,2,3,3,1,1,1,1,1)$
\item $(16:3,2,3,2,2,1,1,1,1)$
\item $(16:3,2,2,2,2,2,1,1,1)$
\item $(16:3,1,6,1,1,1,1,1,1)$
\item $(16:3,1,5,2,1,1,1,1,1)$
\item $(16:3,1,4,3,1,1,1,1,1)$
\item $(16:3,1,4,2,2,1,1,1,1)$
\item $(16:3,1,3,3,2,1,1,1,1)$
\item $(16:3,1,2,2,2,2,2,1,1)$
\item $(16:3,0,7,1,1,1,1,1,1)$
\item $(16:3,0,6,2,1,1,1,1,1)$
\item $(16:3,0,5,2,2,1,1,1,1)$
\item $(16:3,0,4,3,2,1,1,1,1)$
\item $(16:3,0,4,2,2,2,1,1,1)$
\item $(16:3,0,4,2,2,2,1,1,1)$
\item $(16:3,0,4,2,2,2,1,1,1)$
\item $(16:3,0,3,3,2,2,1,1,1)$
\item $(16:3,0,3,2,2,2,2,1,1)$
\item $(16:3,0,2,2,2,2,2,2,1)$
\end{enumerate}
\end{multicols}
\end{enumerate}
For the  case i.,  we have $\mathfrak{I}:=\mathfrak{n}\oplus(\underset{\xi\in\Delta}\oplus \mathfrak{g}_\xi)$ is an ideal of $\mathfrak{g}$, which is a contradiction.   Let $e_\beta$ be an element of the basis of $\mathfrak{g}_\beta$. As $e_\beta^{[2]}\in \mathfrak{g}_\beta^{[2]}\subseteq \mathfrak{h}=\mathfrak{t}\oplus\mathfrak{n}$, then there are $t_\beta\in \mathfrak{t}$ and $n_\beta\in\mathfrak{n}$ such that  $e_\beta^{[2]}=t_\beta+n_\beta$. As $t_\beta\in \mathfrak{t}$, there exist $\delta_1$, $\delta_2$, $\delta_3$ in $K$, such that $t_\beta=\delta_1t_1+\delta_2t_2+\delta_3t_3$. On the other hand, $0=[e_\beta,e_\beta^{[2]}]=\beta(t_\beta)e_\beta+[e_\beta,n_\beta]$ and then $\text{ad}(n_\beta)(e_\beta)=\beta(t_\beta)e_\beta$. Since $n_\beta$ is nilpotent,   $\text{ad}(n_\beta)$ is nilpotent and therefore $\beta(t_\beta)=\delta_2=0$. So $e_\beta^{[2]}= \delta_1t_1+\delta_3t_3 + n_\beta$. Choosing $\delta_1\neq0$ we have $t_\beta\neq0$ and $\alpha(t_\beta)=\delta_1\neq0$. By Theorem \ref{proposicion51}, we have $\mathfrak{g}_\alpha\cong \mathfrak{g}_{\alpha+\beta}$. Therefore the cases from i. to xxvii.,    except xii. and xix., are impossible, because $\text{dim}_{K}(\mathfrak{g}_{\alpha})\neq \text{dim}_{K}(\mathfrak{g}_{\alpha+\beta})$.
On the other hand, choosing $\delta_3\neq0$, we have  $\gamma(t_\beta)=\delta_3\neq0$. Thus, by Theorem \ref{proposicion51}, we have $\mathfrak{g}_\gamma\simeq \mathfrak{g}_{\beta+\gamma}$, but $2=\text{dim}_{K}(\mathfrak{g}_\gamma)\neq \text{dim}_{K}(\mathfrak{g}_{\beta+\gamma})=1$. Hence  the cases xii. and xix. are not possible. 
In the case  xxviii. choosing $\delta_1\neq\delta_3$, we have $(\alpha+\gamma)(t_\beta)=\delta_1+\delta_3\neq0$. Therefore, by Theorem \ref{proposicion51} we have $\mathfrak{g}_{\alpha+\gamma}\simeq \mathfrak{g}_{\alpha+\beta+\gamma}$, which is absurd, since $2=\text{dim}_{K}(\mathfrak{g}_{\alpha+\gamma})\neq \text{dim}_{K}(\mathfrak{g}_{\alpha+\beta+\gamma})=1$ and hence this case is not possible.

\medskip

It follows from the above facts that:

\begin{proposition}\label{ultimoprop}
There are no simple Lie $2$-algebras of dimension between 10 and 16,  and with  toral rank $3$.
\end{proposition}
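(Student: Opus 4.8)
Because the case analysis carried out above already disposes of every possibility, the proof is essentially a summary; the plan is to lay out the structure of that analysis. By Proposition~\ref{payares} there is no simple Lie $2$-algebra of toral rank $3$ with $\text{dim}_{K}\le 6$, and by Proposition~\ref{payarin} none with $7\le\text{dim}_{K}(\mathfrak{g})\le 9$; moreover Proposition~\ref{payarin} leaves, for $\text{dim}_{K}(\mathfrak{g})\ge 7$, only the root system $\Delta_0=\mathfrak{t}^{*}\setminus\{0\}$ to be considered, in which case all seven nonzero weight spaces occur and hence $\text{dim}_{K}(\mathfrak{g})\ge\text{dim}_{K}(\mathfrak{h})+7\ge 10$. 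Fixing an integer $d$ with $10\le d\le 16$, I would suppose $(\mathfrak{g},[2])$ simple of toral rank $3$ and dimension $d$, list the finitely many admissible vectors $\mathfrak{P}$, and contradict each.

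The constraints on $\mathfrak{P}=(\text{d}(\mathfrak{g}):\text{d}(\mathfrak{t}),\text{d}(\mathfrak{n}),\text{d}(\mathfrak{g}_\alpha),\dots)$ are that $\text{d}(\mathfrak{t})=3$, $\text{d}(\mathfrak{n})\ge 0$, each root space has dimension at least $1$, the ordering $\text{d}(\mathfrak{g}_\alpha)\ge\text{d}(\mathfrak{g}_\beta)\ge\text{d}(\mathfrak{g}_\gamma)\ge\text{d}(\mathfrak{g}_\xi)$ of Remark~\ref{obs33} holds, and $3+\text{d}(\mathfrak{n})+\sum_{\xi\in\Delta}\text{d}(\mathfrak{g}_\xi)=d$. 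Enumerating the solutions gives exactly the lists displayed above for each $d$ (a few entries repeat, which is harmless).

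Every admissible $\mathfrak{P}$ then falls to one of four recurring arguments. \textbf{(a)} If $\text{d}(\mathfrak{n})\ge 1$ and all root spaces are one-dimensional, then $[\mathfrak{n},\mathfrak{g}_\xi]=0$ (Lemma~\ref{edfrr}) and $[\mathfrak{g}_\xi,\mathfrak{g}_\xi]\subseteq\text{Ker}(\xi)$ is $0$, so $\mathfrak{I}:=\mathfrak{n}\oplus\bigl(\bigoplus_{\xi\in\Delta}\mathfrak{g}_\xi\bigr)$ is a proper nonzero ideal, contradicting simplicity. \textbf{(b)} Since $\mathfrak{g}=[\mathfrak{g},\mathfrak{g}]$ and $-\xi=\xi$ in characteristic $2$, the part of $[\mathfrak{g},\mathfrak{g}]$ lying in $\mathfrak{h}$ is contained in $[\mathfrak{n},\mathfrak{n}]+\sum_{\xi\in\Delta}[\mathfrak{g}_\xi,\mathfrak{g}_\xi]$, hence $\mathfrak{t}\subseteq\mathfrak{n}+\sum_{\xi\in\Delta}[\mathfrak{g}_\xi,\mathfrak{g}_\xi]$; when the root spaces are few and small this forces $\text{dim}_{K}(\mathfrak{t})\le 2$. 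A sharper form uses Lemma~\ref{patrondepatrones} together with the Jacobi identity applied to $\mathfrak{g}_\eta$ with $\xi+\eta\notin\Delta$ to pin each $[\mathfrak{g}_\xi,\mathfrak{g}_\xi]$ into a prescribed slice $\langle v_\xi\rangle\oplus\mathfrak{n}$, after which a $3\times 3$ linear system over $\mathbb{F}_2$ shows that some $t_i\notin[\mathfrak{g},\mathfrak{g}]$, contradicting $\mathfrak{t}\subseteq[\mathfrak{g},\mathfrak{g}]$. \textbf{(c)} If $\text{d}(\mathfrak{n})=0$ and the root spaces have unequal dimensions, I would pick a basis vector $e_\beta\in\mathfrak{g}_\beta$, write $e_\beta^{[2]}=t_\beta+n_\beta$ with $t_\beta\in\mathfrak{t}$, $n_\beta\in\mathfrak{n}$, deduce $\beta(t_\beta)=0$ from $[e_\beta,e_\beta^{[2]}]=0$ and the nilpotence of $\text{ad}(n_\beta)$, then choose the free coefficients of $t_\beta$ so that $\alpha(t_\beta)\ne0$ (or $(\alpha+\gamma)(t_\beta)\ne0$), and apply Theorem~\ref{proposicion51} to obtain $\mathfrak{g}_\alpha\cong\mathfrak{g}_{\alpha+\beta}$ (resp.\ $\mathfrak{g}_{\alpha+\gamma}\cong\mathfrak{g}_{\alpha+\beta+\gamma}$), contradicting the strict dimension inequality recorded in $\mathfrak{P}$. \textbf{(d)} A handful of low-dimensional leftovers are treated directly, e.g.\ by showing $[\mathfrak{g}_\alpha,\mathfrak{g}_\alpha]\subseteq\langle t_2,t_3\rangle$ from $[e_\alpha,e_\alpha]=0$ and combining this with $\mathfrak{t}\subseteq[\mathfrak{g}_\alpha,\mathfrak{g}_\alpha]$. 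Since each displayed $\mathfrak{P}$ is of one of these types, no simple $\mathfrak{g}$ of dimension $10$--$16$ and toral rank $3$ exists.

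I expect the main obstacle to be bookkeeping rather than a new idea: one must be sure the list of admissible $\mathfrak{P}$ is complete for each $d$, and that argument \textbf{(c)} genuinely applies, i.e.\ that not every element of the relevant $\mathfrak{g}_\xi^{[2]}$ is forced into $\mathfrak{n}$ — which is precisely where the dimension hypotheses and the constraint $\xi(t_\xi)=0$ enter. The one technically delicate step, recurring throughout \textbf{(b)}, is the linear-algebra check that the vectors $v_\xi$ obtained for $[\mathfrak{g}_\xi,\mathfrak{g}_\xi]$ span only a two-dimensional subspace of $\mathfrak{t}$ — equivalently, that a certain $3\times 3$ system over $\mathbb{F}_2$ is inconsistent — since it is this failure of $\mathfrak{t}\subseteq[\mathfrak{g},\mathfrak{g}]$ that produces the contradiction.
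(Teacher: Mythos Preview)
Your proposal is correct and follows essentially the same approach as the paper: enumerate the admissible dimension vectors $\mathfrak{P}$ for each $10\le d\le 16$ and eliminate each one via the ideal argument (your \textbf{(a)}), the bound $\mathfrak{t}\subseteq\sum_{\xi}[\mathfrak{g}_\xi,\mathfrak{g}_\xi]$ combined with Lemma~\ref{patrondepatrones} (your \textbf{(b)} and \textbf{(d)}), or Theorem~\ref{proposicion51} applied to a suitable $e_\xi^{[2]}$ (your \textbf{(c)}). One small remark: the ``sharper form'' of \textbf{(b)} using $\xi+\eta\notin\Delta$ is not actually available here since $\Delta=\Delta_0=\mathfrak{t}^*\setminus\{0\}$, and indeed the paper does not invoke it in this section---only the direct dimension bound from Lemma~\ref{patrondepatrones} is used; the delicate point you flag about ``choosing'' the coefficients of $t_\beta$ is present in the paper's argument as well.
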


We finish this work presenting our main result, which follows from 
  Propositions \ref{payares},  \ref{payarin} and \ref{ultimoprop}.

\begin{theorem}\label{theorem53}
There are no simple Lie $2$-algebras of dimension less than or equal to $16$, and toral rank $3$.
\end{theorem}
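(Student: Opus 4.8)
The plan is to argue by contradiction: suppose $(\mathfrak{g},[2])$ is a simple Lie $2$-algebra with $MT(\mathfrak{g})=3$ and $\dim_K\mathfrak{g}\le 16$, and eliminate every possibility according to the dimension. The range $\dim_K\mathfrak{g}\le 6$ is immediate: $\mathfrak{g}$ is in particular a simple Lie algebra over $K$ of dimension at most $6$ which is not the Witt algebra $W(1;\underline{1})$ (a simple $W(1;\underline{1})$ over $K$ would be absurd), so the low-dimensional classification (\cite{kl}, Theorem~2.2) forces $\dim_K\mathfrak{g}=3$ and hence $\mathfrak{g}\cong\mathfrak{o}_3(K)$; but $\mathfrak{o}_3(K)$ admits no $2$-map. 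This is Proposition~\ref{payares}, and from here on $\dim_K\mathfrak{g}\ge 7$.

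Next I would fix a $3$-dimensional maximal torus $\mathfrak{t}$, write $\mathfrak{h}=\mathfrak{c}_\mathfrak{g}(\mathfrak{t})=\mathfrak{t}\oplus\mathfrak{n}$ with $\mathfrak{n}$ $2$-nilpotent, and use Lemma~\ref{dvdg1} to identify $\mathfrak{t}^*$ with $\mathbb{F}_2^3$ and obtain the root space decomposition $\mathfrak{g}=\mathfrak{h}\oplus\bigoplus_{\xi\in\Delta}\mathfrak{g}_\xi$ (Theorem~\ref{patron}). After a $GL_3(\mathbb{F}_2)$-change of basis I may assume $\alpha,\beta,\gamma\in\Delta$ together with the dimension ordering of Remark~\ref{obs33}. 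The argument then splits on $\operatorname{Card}(\Delta)$. When $\operatorname{Card}(\Delta)\le 6$ I would carry out the case-by-case analysis of Proposition~\ref{payarin} over the $15$ possibilities $\Delta_1,\dots,\Delta_{15}$: the decisive inputs are that $\mathfrak{h}\oplus\mathfrak{g}_\xi$ is solvable (whence $[\mathfrak{g}_\xi,\mathfrak{g}_\xi]\subseteq\ker\xi$ by Lemma~\ref{patrondepatrones}), that $[\mathfrak{n},\mathfrak{g}_\xi]=0$ when $\dim_K\mathfrak{g}_\xi=1$ (Lemma~\ref{edfrr}), and that for $\xi+\eta\notin\Delta$ the Jacobi identity plus nilpotence of $\mathrm{ad}(\mathfrak{n})$ forces $\eta([\mathfrak{g}_\xi,\mathfrak{g}_\xi])=0$. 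Combining these across all of $\Delta$ either confines $\mathfrak{t}$ inside $\sum_\xi[\mathfrak{g}_\xi,\mathfrak{g}_\xi]\cap\mathfrak{t}$, forcing $\dim_K\mathfrak{t}\le 2$, or excludes some toral element from $[\mathfrak{g},\mathfrak{g}]$ --- in either case contradicting simplicity (recall $\mathfrak{t}\subseteq[\mathfrak{g},\mathfrak{g}]$). In particular this rules out $7\le\dim_K\mathfrak{g}\le 9$.

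For $\operatorname{Card}(\Delta)=7$ one has $\dim_K\mathfrak{g}\ge 3+7=10$, and I would enumerate, dimension by dimension for $10\le\dim_K\mathfrak{g}\le16$, all admissible vectors $\mathfrak{P}=(\dim_K\mathfrak{g}:\dim_K\mathfrak{t},\dim_K\mathfrak{n},\dim_K\mathfrak{g}_\alpha,\dots,\dim_K\mathfrak{g}_{\alpha+\beta+\gamma})$ consistent with Remark~\ref{obs33}. For each vector one of three reasons applies: (a) when $\mathfrak{n}\ne0$ and every bracket $[\mathfrak{g}_\xi,\mathfrak{g}_\xi]$ lands in $\mathfrak{n}$, the subspace $\mathfrak{n}\oplus\bigoplus_\xi\mathfrak{g}_\xi$ is a proper nonzero ideal; (b) a dimension count of the $[\mathfrak{g}_\xi,\mathfrak{g}_\xi]$ shows $\mathfrak{t}\not\subseteq\sum_\xi[\mathfrak{g}_\xi,\mathfrak{g}_\xi]$, again contradicting $\mathfrak{t}\subseteq[\mathfrak{g},\mathfrak{g}]$; or (c) for a suitable basis vector $e_\alpha$ the square $e_\alpha^{[2]}=t_\alpha+n_\alpha$ has $t_\alpha\ne0$ and $\eta(t_\alpha)\ne0$ for some root $\eta$ with $\dim_K\mathfrak{g}_\eta\ne\dim_K\mathfrak{g}_{\alpha+\eta}$, so Theorem~\ref{proposicion51} gives $\mathfrak{g}_\eta\cong\mathfrak{g}_{\alpha+\eta}$, a contradiction. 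This is Proposition~\ref{ultimoprop}. Assembling the three ranges $\dim_K\mathfrak{g}\le 6$, $7\le\dim_K\mathfrak{g}\le 9$ and $10\le\dim_K\mathfrak{g}\le16$ proves the theorem.

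The main obstacle is the bookkeeping in the last step: one must be certain the list of vectors $\mathfrak{P}$ is exhaustive and that in every surviving case the $2$-map and the basis of $\mathfrak{t}$ can be arranged, using the $GL_3(\mathbb{F}_2)$-action, so that Theorem~\ref{proposicion51} truly applies --- that is, so that some root vector has a nonzero toral part in its square on which a root giving a \emph{strictly smaller} weight space does not vanish. Verifying uniformly that such a choice exists in each case is the delicate point; by comparison the range $\operatorname{Card}(\Delta)\le 6$ is routine once Lemmas~\ref{edfrr}, \ref{patrondepatrones} and the Jacobi-identity argument are in place.
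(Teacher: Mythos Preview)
Your proposal is correct and follows essentially the same approach as the paper: the theorem is obtained by combining Propositions~\ref{payares}, \ref{payarin} and \ref{ultimoprop}, and your sketch accurately describes the content of each of these and the tools (Lemmas~\ref{edfrr}, \ref{patrondepatrones}, the Jacobi-identity argument, and Theorem~\ref{proposicion51}) used in their proofs. You also correctly identify as the delicate point the need, in the $\operatorname{Card}(\Delta)=7$ analysis, to produce a root vector whose $[2]$-square has nonzero toral part on which a suitable root does not vanish; the paper handles this exactly as you outline, by writing $e_\xi^{[2]}=t_\xi+n_\xi$ and arguing componentwise in the basis $\{t_1,t_2,t_3\}$.
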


\end{document}